\newcommand{\cc}{\mathcal}
\newcommand{\rr}{\mathbb{R}}
\newcommand{\bb}{\mathbb}
\newcommand{\rn}{\mathbb{R}^N}
\newcommand{\sla}{\backslash}
\newcommand{\bdisp}{\begin{displaymath}}
\newcommand{\edisp}{\begin{displaymath}}
\newcommand{\bsplit}{\begin{split}}
\newcommand{\esplit}{\end{split}}
\newcommand{\bacc}{\left\{}
\newcommand{\eacc}{\right\}}
\newcommand{\bp}{\left(}
\newcommand{\ep}{\right)}
\newcommand{\bint}{\left[}
\newcommand{\eint}{\right]}
\newcommand{\bnorme}{\left\|}
\newcommand{\enorme}{\right\|}
\newcommand{\babsolu}{\left|}
\newcommand{\eabsolu}{\right|}
\newcommand{\be}{\begin{equation}}
\newcommand{\ee}{\end{equation}}
\newcommand{\ba}{\begin{array}}
\newcommand{\ea}{\end{array}}
\newcommand{\bdes}{\begin{description}}
\newcommand{\edes}{\end{description}}
\newcommand{\benu}{\begin{enumerate}}
\newcommand{\eenu}{\end{enumerate}}
\newcommand{\associe}{\mapsto}
\newcommand{\avaleur}{\rightarrow}
\newtheorem{defi}{Definition}[section]
\newtheorem{lem}[defi]{Lemma}
\newtheorem{pro}[defi]{Proposition}
\newtheorem{thm}[defi]{Theorem}
\newtheorem{cor}[defi]{Corollary}
\newtheorem{rmq}[defi]{Remark}
\newtheorem{exemple}[defi]{Example}
\newcommand{\dal}{\Delta^{\frac{\alpha}{2}}}
\newcommand{\daln}{(-\Delta)^{\frac{\alpha}{2}}}
\newcommand{\hd}{H_D^{\alpha,\varphi}}
\title{Nonnegative entire bounded solutions to some semilinear equations involving the fractional Laplacian}
\author{Mohamed Ben Chrouda$^a$ \and Mahmoud Ben Fredj$^b$}
\begin{document}
\maketitle
\begin{center}{\scriptsize{\emph{$^a$Dep. of Mathematics, High institute of computer sciences and mathematics, 5000, Monastir, Tunisia }}}\end{center}
\begin{center}{\scriptsize{\emph{$^b$Dep. of Mathematics, Faculty of Sciences of Monastir, 5019 Monastir, Tunisia}}}\end{center}
\begin{abstract} The main goal  is to establish necessary and sufficient conditions  under which the fractional semilinear elliptic equation $\dal u=\rho(x)\,\varphi(u)$  admits  nonnegative nontrivial  bounded solutions in the whole space $\rn$. \end{abstract}
\section{Introduction}
Let $\alpha\in]0,2[,$ $p\in\rr$ and let  $\rho$ be a nonnegative locally bounded Borel function in $\rn,$ $N\geq 3.$   Our main goal in this paper is to derive  sufficient and necessary conditions for existence of  nontrivial  bounded solutions to the equation
\[\dal u=\pm \rho\,u^p \]in the whole space $\rn,$ where $\dal$ stands for the fractional Laplacian $-(-\Delta)^{\frac{\alpha}{2}}$ which is the infinitesimal generator of the standard symmetric $\alpha$-stable process in $\rn$ and which appears, among other fields, in anomalous diffusions in plasma, flames propagation and chemical reactions in liquids.

In order to reach our purpose, we shall study, in two different situations (which are specified below), the more general equation
\begin{equation}\label{ei2}\dal u= \pm\rho\,\varphi(u)\end{equation} in the whole space $\rn,$ where $\varphi$ is a nonnegative real-valued Borel function. Solutions of such equations are understood in the distributional sense and are called entire solutions in the literature. Along the paper we will look only for nonnegative solutions, so many times we will omit the term "nonnegative".

For the limiting case $\alpha=2,$ equations of the kind of (\ref{ei2}) have been the main subject of investigation in a large amount of works. Various hypothesis on $\rho$ and on the nonlinearity $\varphi$ have been considered. Without any attempt to review the references  here,  one can see \cite{mahmoud,bresiz,delPino,edelson,8,khalifabounded,kusano,lair,lairwood,lazer} (the list is far from complete).
Such equations have been investigated in different classes of domains, bounded and unbounded,  with several kinds of smoothness.

Recently, several studies have been performed for classical elliptic equations with the Laplacian operator substituted by its fractional powers \cite{autuori,barrios,birkner,brandle,maagli,fall,felmerquaas,felmerwang,servadei} by using almostly variational and partial differential equations's related techniques. In particular, there has been an interest to the  solutions in the whole space $\rn$ (see, for instance, \cite{autuori,birkner,felmerquaas}). 
  In this direction, we intend to make some contribution towards the existing literature by studying two kinds of semilinear equations.  Both of them are one of the most commonly considered in the literature for the classical Laplacian. However, to the best of our knowledge, such equations (in the whole space $\rn$) have not yet been studied  in the framework of the fractional Laplacian.
  
  In the first situation, we are interested in the following nonlinear equation
  \begin{equation}\label{eip}\dal u= \rho\,\varphi(u)\end{equation}
  where $\varphi :[0,+\infty[\to [0,+\infty[  $ is continuous and nondecreasing such that $\varphi(0)=0$ (the reference example corresponds to the case $\varphi(t)=t^p,$ $p>0$).
   We prove that Eq. (\ref{eip}) admits a nontrivial  entire  bounded solution if and only if there exists a transient  set $A$ (we recall the definition below) and $x_0\in\rn$ such that 
 \[\int_{A^c}\frac{\rho(y)}{\babsolu x_0-y\eabsolu^{N-\alpha}}dy<\infty\]
 where $A^c$ is the complementary of the set $A$ in $\rn.$
 It is not hard to see that  the above condition holds true (with $A=\emptyset$ and $x_0=0$) whenever 
 \begin{equation}\label{a1}\int_\eta^\infty r^{\alpha-1}\rho^*(r)\,dr<\infty,\end{equation}
for some $\eta>0,$ where $\rho^*$ is defined for every $r\geq0$ by $\rho^*(r)=\sup_{\babsolu x\eabsolu=r}\rho(x).$ We shall prove that the converse is also true in the case where $\rho$ is radially symmetric on $\rn.$ However, for a genaral $\rho$ we verify that (\ref{a1}) is not a necessary condition. For the purpose hereof, we shall prove that every classical superharmonic function is $\alpha$-superharmonic (relatively to the fractional Laplacian), a result which turns out to be of interest in itself .

In the second place, we focus on the following negative singular perturbation 
 \begin{equation}\label{ein}\daln u= \rho\,\varphi(u)\end{equation}
  where $\varphi :]0,+\infty[\to [0,+\infty[  $ is continuous and nonincreasing (the reference example corresponds to the case $\varphi(t)=t^p,$ $p<0$).
 We prove that Eq. (\ref{ein}) admits a nontrivial  entire  bounded solution if and only if $\rho$ is potentially bounded, that is,
\begin{equation}\label{aa}\sup_{x\in\rn}\int_{\rn}\frac{\rho(y)}{\babsolu x-y\eabsolu^{N-\alpha}}dy<\infty.\end{equation}
In the particular case where $\rho$ is radially symmetric, we shall show that (\ref{aa}) is equivalent to the condition 
\begin{equation}\label{aaa}\int_\eta^\infty r^{\alpha-1}\rho(r)\,dr<\infty\end{equation}
for some $\eta>0.$
Furthermore, we prove  that Eq. (\ref{ein}) admits a nonnegative entire bounded solution decaying to zero at infinity whenever (\ref{aa}) or (\ref{aaa}) holds true.

  Considerable progress has been made recently in extending potential-theoretic proprieties of Brownian motion to symmetric $\alpha$-stable process (see for example \cite{bet,bogdanliouville,bogdan1997,bogdan1999,bogdanby,bogdan2009,chengauge,chensong1997,chensong1998,michalik}). In this paper, sometimes we have cited some references dealing with Brownian motion but the needed proofs and technics works systimaticaly for any regular Markov process and in particular for the $\alpha$-stable symmetric process.

An important feature of the fractional Laplacian is its nonlocal property, which makes it difficult to handle. Needless to say, for the classical Dirichlet problem, the boundary datum is concentrated only on $\partial D$ whereas the Dirichlet condition for fractional Laplacian must prescribed in all $D^c.$ Another simple observation at this early stage is that in the classical setting, the maximum principle states that a subharmonic function is bounded above inside a domain by its values on the boundary. As a consequence, every radial subharmonic function $s$ in $\rn$ is increasing in the sense that $s(x)\geq s(x_0)$ for every $\babsolu x\eabsolu\geq \babsolu x_0\eabsolu.$ This property is no longer true (in general) for $\alpha$-subharmonic functions since the maximum needs not to be reached at the boundary but eventually at a exterior point.

Our results follow  up those obtained in \cite{khalifabounded,8} for the classical Laplacian. Our development is a standard one. It combines probabilistic and analytic tools from potential theory. However, most of the arguments are susbtancially modified in comparison with its classical counterparts. 

\section{Preliminaries}

 For every subset  $F$ of $\bb{R}^N$, let $\cc{B}(F)$ be the set of all Borel measurable functions on $F$ and
 let $\cc{C}(F)$ be the set of all continuous real-valued functions on $F.$ If $\cc{G}$ is a set of numerical
  functions then $\cc{G}^+$ (respectively $\cc{G}_b$) will denote the class of all functions in $\cc{G}$ which
  are nonnegative (respectively bounded). $\cc{C}^k(F)$ is the class of all functions that are $k$ times   continuously differentiable on $F$ and $\cc{C}_0(F)$ is the set of all continuous  functions on $F$ such that $u=0$ on $\partial F,$ which means that $\lim_{x\to z}u(x)=0$ for all $z\in\partial F$ and $\lim_{x\to\infty}u(x)=0$ if $F$ is unbounded.  The uniform  norm will be denoted by $\left\|\cdot\right\|.$

Let $\alpha\in ]0,2[$ and $N\geq3.$ We denote by $(\Omega,X_t,P^x)$ the standard rotation (symmetric) invariant stable process in $\rn,$ with index of stability $\alpha,$ and  characteristic function 
$$E^0[e^{i<\xi,X_t>}]=\int_{\rn} e^{i<\xi,x>}p(t,x)\,dx=e^{-t\babsolu \xi\eabsolu^\alpha}; \quad \xi\in \rn,\ t\geq0,$$
where $p(t,x,y)=p(t,x-y)$ is the transition density of the process which is uniquely determined by its Fourier tranform.
As usual, $E^x$ is the expectation with respect to the distribution $P^x$ of the process starting from $x\in\rn.$ The limiting classical case $\alpha=2$ corresponds to the Brownian motion with Laplacian $\Delta=\sum_{i=1}^N\partial_i^2$ as generator. Nevertheless, when $0<\alpha<2,$ the process has as  generator the fractional Laplacian $\dal$  which  is a prototype of non-local operators.

For the reader's convinence, we recall the definition of the fractional Laplacian. We denote by $\mathcal{L}_\alpha$ the set of all Borel measurable functions $u:\rn\to \rr$ such that 
\[\int_{\rn}\frac{\babsolu u(y)\eabsolu}{\bp1+\babsolu y\eabsolu\ep^{N+\alpha}}dx<\infty.\] 
Note that  bounded Borel measurable functions are in  $\mathcal{L}_\alpha.$ The fractional power of the Laplacian $\dal $ is defined by 
$$\Delta^{\frac{\alpha}{2}}u(x)=c_{N,-\alpha}\int_{\rn}\frac{u(y)-u(x)}{\babsolu y-x\eabsolu^{N+\alpha}}\,dy\quad;\qquad x\in\rn,$$
for every Borel function $u$ for which the integral exists.
The constant $c_{N,-\alpha}$ is  depending only on $N$ and $\alpha:$ $c_{N,-\alpha}=2^{\alpha}\pi^{-\frac{N}{2}}\babsolu\Gamma(\frac{-\alpha}{2})\eabsolu^{-1}\Gamma(\frac{N+\alpha}{2}).$  We point out   that $\dal u$ is well defined for every $u\in \mathcal{L}_\alpha\cap \mathcal{C}^2.$ However,   we can define $\dal $ as a distribution in $\mathcal{L}_\alpha$ by 
\[<\dal u,\theta>=\int_{\rn}u(y)\,\dal\theta(y)\,dy\quad;\quad \theta\in \mathcal{C}_c^\infty(\rn),\]
where $ \mathcal{C}_c^\infty(\rn)$ is the set of all infinitely differentiable functions on $\rn$ with compact support.
 
  Let $D$ be a bounded domain in $\rn$ and let   $\tau_D$ be  the first exit time from $D$ by $X,$
 i.e.,
$$
\tau_D=\inf\left\{t>0;X_t\notin D\right\}.
$$ 
Let $u$ be a Borel measurable locally integrable function on $\rn.$ We say that $u$ is $\alpha$-harmonic in $D$  if 
\begin{equation}\label{harmonic}
E^x\bint\babsolu u(X_{\tau_U})\eabsolu\eint<\infty \quad\mbox{and}\quad u(x)=E^x[u(X_{\tau_U})]\quad;\quad x\in U,
\end{equation} 
for every bounded open set $U$ with closure $\overline{U}$ contained in $D.$ If, in addition, $u(x)=0,$ for every $x\in D^c,$ we say that $u$ is singular $\alpha$-harmonic. It is called regular $\alpha$-harmonic in $D$ if (\ref{harmonic}) holds for $U=D.$ By the strong Markov property of $X_t,$ a regular $\alpha$-harmonic function is necessarily $\alpha$-harmonic. But the converse is not generally true. On the other hand, as in the classical case ($\alpha=2$) the $\alpha$-harmonicity can be descriped in terms of $\dal.$ Indeed, it is proved that a function $u\in\mathcal{L}_\alpha$ is $\alpha$-harmonic in $D$ if and only if it is continuous in $D$ and $\dal u=0$ in $D$ in the distributional sense (see for example \cite[Theorem 3.9]{bogdanby} for a detailed proof). We say that $u$ is $\alpha$-superharmonic in $D$ if 
\begin{equation}
E^x\bint\babsolu u(X_{\tau_U})\eabsolu\eint<\infty \quad\mbox{and}\quad u(x)\geq E^x[u(X_{\tau_U})]\quad;\quad x\in U,
\end{equation} 
for every bounded open set $U$ with closure $\overline{U}$ contained in $D.$

Let us denote $(X_t^D)$ the symmetric stable process killed upon exiting $D.$ It is well known that the transition density is given by
$$
p^D(t,x,y)=p(t,x,y)-r^D(t,x,y)\quad;\quad \ t>0,\ x,y\in D,
$$
where
$$r^D(t,x,y)=E^x\left[ p(t-\tau_D,X_{\tau_D},y),\tau_D< t\right].$$
The corresponding semigroup is then defined by
$$P^D_tf(x)=E^x\left[f(X_t) , t<\tau_D \right]=\int_D p^D(t,x,y)f(y)dy\quad;\quad x\in D,$$
for every Borel measurable function $f$ for which this integral
makes  sense.
A point $x\in\partial D$ is called regular for the set $D$ if $P^x(\tau_D=0)=1.$ The (open) bounded domain $D$ is called regular if all $x\in \partial D$ are regular for $D$ (for instance,  $C^{1,1}$-domains and domains satisfying the exterior cone condition are regular). In this case each function $f\in \cc{C}_b(D^c)$
 admits an  extension $H_D^\alpha f$ on $\rn$ such that $H_D^\alpha f$ is  regular $\alpha$-harmonic ~in ~$D$\cite{Landkof}. In other words,
  the function $h=H_D^\alpha f$ is the unique solution to the fractional Dirichlet problem
\[
\left\{
\begin{array}{rrrl}
\dal h&=&0& \mbox{in}\  D,\\
h&=&f&\textrm{in}\  D^c.
\end{array}
\right.
\]
Note that in the classical situation $(\alpha=2),$ by the continuity properties of Brownian motion, at the exit time from $D,$ one  necessarily is on $\partial D.$ But due to the jumping nature of the $\alpha$-stable process $(0<\alpha<2)$, at the exit time one could end up anywhere outside $D.$ That's why the natural Dirichlet boundary condition consists in assigning the value of $h$ in $D^c$ rather merely on $\partial D.$

For every $x\in D,$ the $\alpha$-harmonic measure relative to $x$ and $D,$
which will be denoted by $H_D^\alpha(x,\cdot),$
 is defined to be the positive Radon measure on $ D^c$ given by the mapping $f\mapsto H_D^\alpha f(x).$ It is proved in \cite{bogdan1997} that for $D$ say Lipschitz, $H_D^\alpha(x,\cdot)$ is concentrated on  $\overline{D}^c$ and is absolutely continuous with respect to the Lebesgue measure on $D^c.$ Furtheremore, the corresponding density function $P_D(x,y),$ $x\in D,$ $y\in D^c,$ is continuous in $(x,y)\in D\times$  ~$\overline{D}^c.$ In this situation, the solution of the Dirichlet problem can be expressed in term of the Poisson kernel $P_D$ as follows \cite{chensong1997}
 \[ H_D^\alpha f(x)=E^x\bint f(X_{\tau_D})\eint=\int_{D^c}P_D(x,y)\,f(y)\,dy\quad;\quad x\in D.\]
 
 The Green function $G_D^\alpha(\cdot,\cdot)$ of  a  domain $D\subset\rn$  is defined by 
 $$G_D^\alpha(x,y)=\int_0^\infty p^D(t,x,y)\,dt.$$
 Then $G_D^\alpha(x,y)$ is symmetric in $x$ and $y,$ $G_D^\alpha(x,y)$ is positive for $x,y\in D$ and continuous at $x,y\in\rn,$ $x\neq y.$ Also $G_D^\alpha(x,y)=0$ if $x$ or $y$ belongs to $D^c.$ Furtheremore, $G_D^\alpha(\cdot,y)$ is $\alpha$-harmonic in $D\sla \bacc y\eacc$ for every $y\in D$ and regular $\alpha$-harmonic in $D\sla B(y,r)$ for every $r>0.$  The Green function of the whole space $\rn,$ which is also called Riesz kernel, is given by 
 \[G^\alpha_{\rn}(x,y)=\frac{C_{N,\alpha}}{\babsolu x-y\eabsolu^{N-\alpha}}\] where $C_{N,\alpha}=2^{-\alpha}\pi^{-\frac{N}{2}}\Gamma(\frac{N-\alpha}{2})\babsolu\Gamma(\frac{\alpha}{2})\eabsolu^{-1}.$  Also, the explicit formula for the Green function of the ball $B_r=\bacc x\in\rn\ ;\ \babsolu x\eabsolu <r\eacc,\ r>0,$ is well known:
 \begin{equation}\label{green}
 G^\alpha_{B_r}(x,y)=\frac{C_{N,\alpha}}{\babsolu x-y\eabsolu^{N-\alpha}}\int_0^{\frac{\bp r^2-\babsolu x\eabsolu^2\ep\bp r^2-\babsolu y\eabsolu^2\ep}{\babsolu x-y\eabsolu^2}}\frac{s^{\frac{\alpha}{2}}}{(1+s)^{\frac{N}{2}}}\,ds\quad;\quad x,y\in B_r.
 \end{equation}
 
  Let $D$ be a bounded $C^{1,1}$ domain in $\rn.$ We denote by  $\delta(x):=\inf_{z\in\partial D}|x-z|$   the Euclidean distance
from $x\in D$ to the boundary of~$D$.   The following inequality was established in \cite{zha86} for $\alpha=2$ and in \cite{chensong1998} for $\alpha\in ]0,2[$.
\begin{equation}\label{green1}
   G_D^\alpha(x,y)\leq c \min \bacc\frac{1}{|x-y|^{N-\alpha}},\frac{\delta(x)^{\frac{\alpha}{2}}\delta(y)^{\frac{\alpha}{2}}}{|x-y|^N}\eacc,
  \end{equation}
  where $c>0$ is depending only on $N$ and $\alpha.$
  
 The Green operator $G_D^\alpha$ in an open set $D$ is defined, for every Borel measurable function~$f$ for which the following
   integral exists,  by
  \begin{equation}\label{eog}
  G_D^\alpha f(x)=\int_DG_D^\alpha(x,y)f(y)dy\quad;\quad x\in D.
  \end{equation}
   Hence
  $$
  G_D^\alpha f(x)=E^x\left[\int_{0}^{\tau_D}f(X_t)dt\right]=\int_{0}^{\infty}P_t^Df(x)dt\quad ;\quad x\in D.
  $$ 
We  recall that for every  $f\in \cc{B}_b(D)$,
 $G_D^\alpha f$ is a bounded continuous function on~$D$ satisfying $\lim_{x\rightarrow z}G_D^\alpha f(x)=~0$ for every  $z\in \partial D$ if we suppose further that $D$ is regular (all these properties follow by similar routine arguments to those in \cite{12} or \cite{5}).
 Moreover, it is simple to check that for every $f\in\mathcal{B}(\rn)$ such that $G_D^\alpha\babsolu f\eabsolu(x)<\infty$ for some $x\in\rn,$ we have
 \begin{equation}\label{operateurgreen}\dal G_D^\alpha f=-f\end{equation} in the distributional sense (see \cite[proposition 3.13]{bogdanby} or \cite[Lemma 5.3]{bogdandistribution}).
 
 Concluding this part of our preliminaries we refer the reader to \cite{hansen,bogdan2009,Landkof} for broader discussions on analytic counterparts of the above  definitions.
  \section{Nonnegative perturbation}
  
  In this section, we assume that  $\varphi:\bb{R}_+\rightarrow
\bb{R}_+$ is a continuous nondecreasing function such that
$\varphi(0)=0$ and $\rho:\rr^N\avaleur \rr_+$ is a locally bounded function. The aim is to characterize functions $\rho$ for which Eq. (\ref{eip}) has an entire bounded solutions. The outline is as follows. First, we prove that Eq. (\ref{eip}) has one and only one solution in a regular bounded domain $D$ coinciding with a given bounded continuous function on $D^c.$ After giving a sufficient condition to the existence of a nontrivial entire bounded solution of  Eq. (\ref{eip}), we investigate the special case when the function $\rho$ is radially symmetric and finally we return to the general case.
  \subsection{The Dirichlet problem in a bounded domain}
  
  Let $D$ be a bounded regular domain in $\rn.$ We consider the following fractional nonlinear problem
\begin{equation}\label{e31}
\left\{\begin{array}{rlll}\dal u&=&\rho(x)\,\varphi(u)& \mbox{in}\ D,\\
u&=&f&\mbox{in}\  D^c,\end{array}\right.
\end{equation}
where $f$ is a nonnegative bounded continuous function on~$ D^c.$ By a solution to the equation $\dal u=\rho\,\varphi(u)$ in a open set $U\subset\rn,$ we shall mean every real-valued continuous function $u$ on $U$ such that $\rho\varphi(u)$ is locally (Lebesgue) integrable on $U$ and the equality
\[\int_{\rn} u(x)\,\dal \theta(x)\,dx=\int_U \rho(x)\,\varphi(u(x))\,\theta(x)\,dx\]
holds for every nonnegative function $\theta\in\mathcal{C}_c^\infty(U)$.  Supersolutions and subsolutions to this  equation are to be understood in the same way replacing $"="$ by $"\leq"$ and $"\geq "$ respectively.

First, the following lemma states a straightforward  but an important result.
\begin{lem}\label{l1}  Let $u$ be a locally bounded nonnegative function in $\mathcal{B}(\rn).$
 The function $u$ is a solution of (\ref{eip}) in an open set $U\subset \rn$ if and only if $u+G_D^\alpha\bp \rho\varphi(u)\ep=H_D^\alpha u$ holds for every regular open set $D\subset \overline{D} \subset U.$
\end{lem}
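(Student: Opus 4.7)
The plan is a standard decomposition argument resting on the identity $\dal G_D^\alpha g=-g$ (equation (\ref{operateurgreen})) and the uniqueness of the fractional Dirichlet problem. Throughout, pick any regular open set $D$ with $\overline{D}\subset U$. Since $u$ is locally bounded and $\varphi$ is continuous while $\rho$ is locally bounded, the product $\rho\,\varphi(u)$ is bounded on $D$, so $G_D^\alpha(\rho\varphi(u))$ is a bounded continuous function on $D$ vanishing on $\partial D$ (regularity of $D$) and equal to zero on $D^c$. Moreover, $u\in\mathcal{L}_\alpha$ (implicit via local boundedness together with the tail condition required to speak of $\dal u$ as a distribution), so $H_D^\alpha u(x)=E^x[u(X_{\tau_D})]$ is well defined and $\alpha$-harmonic in $D$, with $H_D^\alpha u=u$ on $D^c$.

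For the direct implication, assume $u$ solves (\ref{eip}) in $U$ and set
\[
v:=u+G_D^\alpha\!\bp \rho\varphi(u)\ep-H_D^\alpha u.
\]
On $D^c$ the last two summands cancel with $u$, so $v\equiv 0$ on $D^c$. In $D$, applying $\dal$ in the distributional sense and using (\ref{operateurgreen}) together with the $\alpha$-harmonicity of $H_D^\alpha u$,
\[
\dal v=\rho\varphi(u)-\rho\varphi(u)-0=0\quad\text{in }D.
\]
Thus $v$ is continuous in $D$ (since $u$ is continuous in $U$ by definition of a solution, and the two potential terms are continuous in $D$), $\alpha$-harmonic in $D$, and vanishes on $D^c$. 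The stochastic representation $v(x)=E^x[v(X_{\tau_D})]=0$ then forces $v\equiv 0$, which is the announced identity.

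For the converse, suppose the identity holds for every such $D$. Since $H_D^\alpha u$ is continuous in $D$ and so is $G_D^\alpha(\rho\varphi(u))$, the identity exhibits $u$ on $D$ as a difference of two continuous functions, hence $u\in\mathcal{C}(D)$. Because every $x\in U$ lies in some regular $D$ with $\overline{D}\subset U$ (e.g.\ a small ball), $u$ is continuous throughout $U$. Local integrability of $\rho\varphi(u)$ on $U$ is immediate. Applying $\dal$ distributionally on any such $D$,
\[
\dal u=\dal H_D^\alpha u-\dal G_D^\alpha\!\bp \rho\varphi(u)\ep=0-\bp -\rho\varphi(u)\ep=\rho\varphi(u)\quad\text{in }D,
\]
so $\dal u=\rho\varphi(u)$ distributionally in $U$, i.e.\ $u$ solves (\ref{eip}) in $U$.

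The only delicate point is to justify the distributional identity $\dal v=0$ in $D$ in the direct implication: one must check that the test-function pairing
$\int u\,\dal\theta=\int \rho\varphi(u)\,\theta$ (valid in $U$ for $\theta\in \mathcal{C}_c^\infty(U)$) combines cleanly with (\ref{operateurgreen}) applied to $\rho\varphi(u)$ and with the zero distributional Laplacian of $H_D^\alpha u$ in $D$. This is routine once one restricts the test functions to $\mathcal{C}_c^\infty(D)$ and invokes the linearity of the distributional fractional Laplacian, but it is the step where the nonlocal nature of $\dal$ must be handled with care (the pieces of $v$ do not have compact support, yet the pairing is legitimate because they belong to $\mathcal{L}_\alpha$).
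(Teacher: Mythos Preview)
Your argument is correct and follows essentially the same route as the paper's: the paper defines $h:=u+G_D^\alpha(\rho\varphi(u))$ and establishes the chain ``$u$ solves (\ref{eip}) in $D$ $\Leftrightarrow$ $\dal h=0$ in $D$ $\Leftrightarrow$ $h=H_D^\alpha u$'' in one pass, whereas you treat the two implications separately using $v=h-H_D^\alpha u$, but the ingredients (identity~(\ref{operateurgreen}), the distributional characterization of $\alpha$-harmonicity, and uniqueness of the Dirichlet problem on a regular $D$) are the same. The only cosmetic difference is that the paper's formulation avoids having to argue separately that $v$ is regular $\alpha$-harmonic, since it directly identifies $h$ with the unique harmonic extension once $\dal h=0$ and $h=u$ on $D^c$.
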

\begin{proof} Taking into consideration the fact that $u$ is a solution of Eq. (\ref{eip}) in $U$ if and only if $u$ is a solution of Eq. (\ref{eip}) in each element of some covering of $U$ by open regular bounded subsets, we only need to prove that $u$ is a solution of Eq. (\ref{eip}) in $D$ if and only if $u+G_D^\alpha\bp \rho\varphi(u)\ep=H_D^\alpha u$ for  a regular bounded domain $D.$ To this end, fix a regular bounded domain $D$ and  define $h:=u+G_D^\alpha(\rho\,\varphi(u)).$ Since $u$ is bounded on $D,$ we deduce that $G_D^\alpha(\rho\,\varphi(u))\in\mathcal{C}_0(D).$ Whence, $h=u$ on $D^c$ and   $u$ is continuous on $D$ if only if $h$ is continuous on $D.$ On the other hand, using (\ref{operateurgreen}), for every $\theta\in \mathcal{C}_c^\infty(D),$  we have
\begin{displaymath}
\begin{split}
\int_{\rn} h(x)\dal \theta(x)\,dx&= \int_{\rn} u(x)\dal \theta(x)\,dx+\int_D G_D^\alpha(\rho\varphi(u))(x)\dal \theta(x)\,dx\\
&= \int_{\rn} u(x)\dal \theta(x)\,dx-\int_D \rho(x)\varphi(u(x)) \theta(x)\,dx.
\end{split}
\end{displaymath}
Therefore,   $u$ is a solution to Eq. (\ref{eip}) in $D$ if and only if  $\dal h=0$ in $D$ (or equivalently $h=H_D^\alpha u$) in the distributional sense. Hence, the lemma is proved.
\end{proof}

 The following  comparison principle  will be useful
to prove not only uniqueness but also the existence
 of a solution to  problem~(\ref{e31}).
 \begin{lem}\label{cprin} Let $\Psi\in\cc{B}(\bb{R})$ be a nondecreasing function and let
$u,v\in \cc{C}_b(\rn)$ such that
$$
\dal u\leq \rho(x)\Psi(u)\qquad \mbox{and} \qquad \dal v\geq \rho(x)\Psi(v)\qquad \mbox{in}\ D.
$$
If $u\geq v$ on $ D^c,$ then $u\geq v$ in $\rn.$
\end{lem}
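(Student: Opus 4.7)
The plan is to reduce the statement to a maximum principle for an auxiliary $\alpha$-subharmonic function. Set $w:=v-u\in\cc{C}_b(\rn)$; the hypothesis $u\geq v$ on $D^c$ reads $w\leq 0$ on $D^c$, and the conclusion amounts to $w\leq 0$ on all of $\rn$. Subtracting the two distributional inequalities satisfied by $u$ and $v$ gives
\[\dal w \geq \rho\bp\Psi(v)-\Psi(u)\ep\quad\text{in } D,\]
and the monotonicity of $\Psi$ makes the right-hand side pointwise nonnegative precisely where $v\geq u$.

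The key step is to show that the positive part $w^+:=\max(w,0)$ is $\alpha$-subharmonic in $D$. Arguing pointwise with the integral representation of $\dal$: at any $x\in D$ with $w(x)\leq 0$ one has $w^+(x)=0$ while $w^+\geq 0$ globally, so the integrand defining $\dal w^+(x)$ is nonnegative; at any $x\in D$ with $w(x)>0$ one has $w^+(x)=w(x)$ and $w^+(y)-w^+(x)\geq w(y)-w(x)$ for every $y\in\rn$, hence
\[\dal w^+(x) \geq \dal w(x) \geq \rho(x)\bp\Psi(v(x))-\Psi(u(x))\ep \geq 0,\]
where the last inequality follows from $v(x)>u(x)$, the monotonicity of $\Psi$ and $\rho\geq 0$.

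Once $w^+$ is known to be $\alpha$-subharmonic in $D$, continuous, bounded, and identically zero on $D^c$, the standard maximum principle (via the probabilistic representation recalled in the preliminaries) gives
\[w^+(x) \leq E^x\bint w^+(X_{\tau_D})\eint = 0\quad\text{for every } x\in D,\]
so $w^+\equiv 0$ and consequently $u\geq v$ on $\rn$.

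The main obstacle will be justifying the pointwise computation used to establish the $\alpha$-subharmonicity of $w^+$, because $w$ is only continuous and we only have the distributional inequality $\dal w\geq \rho(\Psi(v)-\Psi(u))$; the integral defining $\dal w$ pointwise need not converge absolutely. This can be handled through a Kato-type inequality for $\daln$, or bypassed by a direct contradiction argument: if $M:=\sup_{\rn}w>0$, then continuity and compactness of $\overline{D}$ together with $w\leq 0$ on $\partial D\subset D^c$ force $w$ to attain the value $M$ at some $x_0\in D$; testing the distributional inequality against a suitable sequence of nonnegative test functions concentrated at $x_0$ would yield $\dal w(x_0)\leq 0$ in an averaged sense, contradicting $\dal w(x_0)\geq \rho(x_0)(\Psi(v(x_0))-\Psi(u(x_0)))\geq 0$ unless $w$ is constant, which is ruled out by $w(x_0)=M>0$ and $w\leq 0$ on $D^c$.
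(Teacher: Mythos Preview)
Your strategy via the maximum principle is the right one, but the path through $w^+$ creates the very difficulty you flag and never fully resolve. The pointwise inequality $\dal w^+(x)\geq \dal w(x)$ at points where $w(x)>0$ cannot simply be read off from the distributional inequality for $w$ (the operator is nonlocal, so test functions supported in $\{w>0\}$ still feel the modification $w\mapsto w^+$ outside), and your fallback ``direct contradiction'' is not a proof as written: a distributional inequality $\dal w\geq 0$ on $D$ does not, without further argument, localize to give a pointwise sign at a maximum point, and even if it did, obtaining $\dal w(x_0)\leq 0$ together with $\dal w(x_0)\geq 0$ only yields $\dal w(x_0)=0$, which is not in itself a contradiction. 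The Kato-inequality route would indeed close the gap, but it is heavier than the statement warrants.

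The paper sidesteps all of this by a simpler choice: instead of working with $w^+$ on all of $D$, it restricts to the open set $U:=\{x\in D:u(x)<v(x)\}$. On $U$ one has $\Psi(u)\leq\Psi(v)$ pointwise, so the distributional inequality
\[
\dal(u-v)\;\leq\;\rho\bigl(\Psi(u)-\Psi(v)\bigr)\;\leq\;0
\]
holds on $U$ with no further manipulation; hence $u-v$ is $\alpha$-superharmonic on $U$ directly. Since $u-v\geq 0$ on $U^c$ (on $U^c\cap D$ by the definition of $U$, and on $D^c$ by hypothesis), the minimum principle for $\alpha$-superharmonic functions forces $u-v\geq 0$ on $U$, contradicting $U\neq\emptyset$. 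The key point you are missing is that restricting to $U$ makes the sign of $\Psi(v)-\Psi(u)$ automatic, so no Kato-type passage from $w$ to $w^+$ is needed at all.
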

 
 \begin{proof}  Define $w=u-v$ and suppose that the open set
$$
U=\left\{x\in D;w(x)<0\right\}
$$
 is not empty. Since $\Psi$ is nondecreasing, it is obvious that  $\dal w\leq \rho(x)\bp\Psi(u)-\Psi(v)\ep\leq 0$
 in~$U$, which means that  $w$ is $\alpha$-superharmonic in~$U.$ Furthermore, it is obvious that $w\geq 0$ on 
   $U^c\cap D$ and on $U^c\cap D^c$ we have also  $w\geq 0$ by hypothesis.
   The minimum principle for $\alpha$-superharmonic functions as stated in \cite{bet} (see also \cite{silvestre2007}) yields that    $w\geq 0$ in $U$     and this is absurd. Therefore~$U$ is
   empty. Hence $u\geq v$ in $D.$
\end{proof}
   
   The following lemma is already obtained in \cite{mahmoud} for the classical case $\alpha=2.$ We present here a readaptation to $\alpha\in ]0,2[.$ 
\begin{lem}\label{l221}
For every $M>0,$ the family  $\left\{G_D^\alpha u;\left\|u\right\|\leq
M\right\}$ is relatively compact in $\cc{B}_b(D)$ with respect to the uniform
 norm.
\end{lem}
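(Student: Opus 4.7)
The plan is to prove the claim by Arzelà--Ascoli applied to the compact set $\overline{D}$. Since $D$ is regular, the preliminaries give $G_D^\alpha u \in \mathcal{C}_0(D)$ for every $u\in\mathcal{B}_b(D)$, so every member of the family $\mathcal{F}:=\{G_D^\alpha u:\|u\|\leq M\}$ extends continuously to $\overline{D}$ by setting it to $0$ on $\partial D$. It then suffices to verify (i) uniform boundedness and (ii) equicontinuity on $\overline{D}$.

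For (i), the pointwise inequality $|G_D^\alpha u(x)|\leq M\,G_D^\alpha 1(x)$ together with $G_D^\alpha 1\in\mathcal{C}_0(D)$ (so in particular $\|G_D^\alpha 1\|<\infty$) yields
$$
\sup_{u\in\mathcal{F}}\|G_D^\alpha u\|\;\leq\;M\,\|G_D^\alpha 1\|\;<\;\infty.
$$
Equicontinuity at boundary points is also immediate from the same estimate: since $G_D^\alpha 1(x)\to 0$ uniformly as $x\to\partial D$, so does $G_D^\alpha u(x)$, uniformly in $u\in\mathcal{F}$.

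For interior equicontinuity, the main input is the Green-function bound (\ref{green1}), which gives $G_D^\alpha(x,z)\leq c\,|x-z|^{\alpha-N}$ and is integrable in $z$ since $\alpha>0$. Given $\varepsilon>0$, choose $\eta>0$ so small that
$$
Mc\int_{B(x,2\eta)}\frac{dz}{|x-z|^{N-\alpha}}\;\leq\;C M\eta^\alpha\;<\;\varepsilon/3
$$
uniformly in $x\in D$. For $x,y\in D$ with $|x-y|\leq\eta$, I split
$$
|G_D^\alpha u(x)-G_D^\alpha u(y)|
\;\leq\; M\!\int_{B(x,2\eta)}\!\bigl(G_D^\alpha(x,z)+G_D^\alpha(y,z)\bigr)\,dz
\;+\;M\!\int_{D\setminus B(x,2\eta)}\!|G_D^\alpha(x,z)-G_D^\alpha(y,z)|\,dz.
$$
The first integral is at most $2\varepsilon/3$ by the choice of $\eta$. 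For the second, $G_D^\alpha$ is jointly continuous off the diagonal, with the $|x-y|$-independent majorant $c|x-z|^{\alpha-N}+c|y-z|^{\alpha-N}$ on $\{|z-x|>2\eta\}$, so dominated convergence makes it smaller than $\varepsilon/3$ once $|x-y|$ is small enough, and the bound is uniform in $u\in\mathcal{F}$.

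The main obstacle is taming the integrable singularity of $G_D^\alpha(x,\cdot)$ on the diagonal, but (\ref{green1}) reduces this to the elementary computation $\int_{|z-x|\leq\eta}|x-z|^{\alpha-N}dz=C\eta^\alpha\to 0$. With (i) and (ii) in hand, Arzelà--Ascoli on $\overline{D}$ delivers relative compactness of $\mathcal{F}$ in the uniform norm, which is the desired conclusion in $\mathcal{B}_b(D)$.
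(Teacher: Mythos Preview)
Your proof is correct and follows essentially the same route as the paper's: both establish uniform boundedness via $G_D^\alpha 1$, then use the bound $G_D^\alpha(x,z)\leq c|x-z|^{\alpha-N}$ to control the integrable diagonal singularity and deduce equicontinuity, finishing with Arzel\`a--Ascoli. The only cosmetic difference is that the paper packages the equicontinuity step through uniform integrability of $\{G_D^\alpha(x,\cdot):x\in D\}$ and Vitali's convergence theorem, whereas you unwind that machinery by hand with the near/far split and dominated convergence; you also make the boundary equicontinuity explicit, which the paper leaves implicit.
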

\begin{proof}
First, we note  that $x\mapsto G_D^\alpha1(x)=E^x[\tau_D]$ is dominated
on~$D$ by $c\,m(D)^{\frac{\alpha}{N}}$ for some constant $c>0$ ( this follows from a direct modification of the proof of \cite[theorem 1.17]{5} ). Here and in all the following,  $m$ denotes the Lebesgue measure in $\rn.$  Consequently for every $u$ such that $\left\|u\right\|\leq~
M$ we get
$$
\left\|G_D^\alpha u\right\|\leq M \sup_{x\in D}E^x[\tau_D].
$$
Thus the family $\left\{G_D^\alpha u;\left\|u\right\|\leq M\right\}$ is
uniformly bounded.  
 Next, we claim  that the family
 $\left\{G_D^\alpha(x,\cdot); x\in D\right\}$ is uniformly integrable.
 Indeed, by (\ref{green1}) there exists $c_1>0$ such that for every Borel subset $A$ of $D$ and every $\eta_0>0$ small enough we have
\begin{displaymath}
\begin{split}
\int_A G_D^\alpha(x,y)dy&\leq c_1 \int_A \frac{dy}{\left|x-y\right|^{N-\alpha}}\\
&\leq
c_1\int_{B(x,\eta_0)}\frac{dy}{\left|x-y\right|^{N-\alpha}}+c_1\int_{A\sla
B(x,\eta_0)}
\frac{dy}{\eta_0^{N-\alpha}}\\
&\leq c_2\bp\eta_0^\alpha+\frac{m(A)}{\eta_0^{N-\alpha}}\ep,
\end{split}
\end{displaymath} where $c_2>0$ depends only on $N$ and $\alpha.$ Let $\varepsilon>0$ and choose $\eta_0$ so that $c_2\eta_0^\alpha<\varepsilon.$ Put $\eta=(\frac{\varepsilon}{c_2}-\eta_0^\alpha)\eta_0^{N-\alpha}.$
 Then  for every  Borel subset $A$ of $D$  such that
$m(A)<\eta$ we have
$$
\int_A G_D^\alpha(x,y)\, dy\leq \varepsilon.
$$
Hence, the uniform integrability of the family $\left\{G_D^\alpha(x,\cdot);
x\in D\right\}$ is shown. Therefore, in virtue of Vitali's
convergence theorem (see, e.g, \cite{rud}),  we conclude that for
every $z\in D$,
\begin{displaymath}
\begin{split}
\lim_{x\to z}\sup_{\|u\|\leq M}\left|\int_D G_D^\alpha(x,y)u(y)dy-\int_DG_D^\alpha(z,y)u(y)dy\right|\\
\qquad\qquad\leq M \lim_{x\to
z}\int_D\left|G_D^\alpha(x,y)-G_D^\alpha(z,y)\right|dy= 0.
\end{split}
\end{displaymath}
This means that   the family $\left\{G_D^\alpha(x,\cdot); x\in D\right\}$
is equicontinuous  which finishes the proof of the lemma.
\end{proof}
For $\alpha=2,$ existence of solutions to semilinear Dirichlet problems of
kind~(\ref{e31})  was widely studied in the literature considering
 various hypotheses on the function~$\varphi$ (see, e.g.,
 \cite{baha02,mahmoud,dyn00,khalifabounded,8}). Under the hypothesis mentioned in the begining of the current section we get, for $\alpha\in]0,2[,$ the following result.

\begin{pro}\label{thm81}
For every $f\in \cc{C}_b^+( D^c),$ there exits one and only one
function $u\in\cc{C}^+_b(\rn)$ solution to
problem~(\ref{e31}). Moreover, for every $x\in\rn,$
\begin{equation}\label{relation} u(x)+G_D^\alpha(\rho\,\varphi(u))(x)=H_D^\alpha f(x)\quad;\quad x\in\rn.\end{equation}
\end{pro}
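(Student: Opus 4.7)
My plan is to deduce uniqueness from the comparison principle in Lemma~\ref{cprin} and to obtain existence by applying Schauder's fixed point theorem to the integral operator suggested by Lemma~\ref{l1}, leveraging the compactness statement in Lemma~\ref{l221}. For uniqueness, if $u_1,u_2\in\mathcal{C}_b^+(\mathbb{R}^N)$ both satisfy (\ref{e31}), they coincide with $f$ on $D^c$; applying Lemma~\ref{cprin} with $\Psi=\varphi$ once to the pair $(u_1,u_2)$ and once with the roles reversed yields $u_1\equiv u_2$.

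For existence, set $M:=\|f\|$ and extend $\varphi$ to all of $\mathbb{R}$ by $\tilde\varphi(t):=\varphi(t^+)$, which remains continuous, nondecreasing, and vanishes on $(-\infty,0]$. Since $D$ is bounded and $\rho$ is locally bounded, $a:=\|\rho\|_{L^\infty(D)}$ is finite, and so is $C:=a\,\varphi(M)\,\sup_{\overline D}G_D^\alpha 1$ (the supremum being finite by the estimate recalled in the proof of Lemma~\ref{l221}). I would work with the closed convex set
\[
K:=\{u\in\mathcal{C}(\overline D)\ :\ -C\leq u\leq M \text{ on }\overline D\}
\]
in the Banach space $\mathcal{C}(\overline D)$ and the operator $T:K\to\mathcal{C}(\overline D)$ defined by $Tu:=H_D^\alpha f-G_D^\alpha(\rho\,\tilde\varphi(u))$. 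Since $D$ is regular, $H_D^\alpha f\in\mathcal{C}(\overline D)$ and $G_D^\alpha(\rho\,\tilde\varphi(u))\in\mathcal{C}_0(D)$, so $Tu\in\mathcal{C}(\overline D)$; the pointwise bounds $0\leq\tilde\varphi(u)\leq\varphi(M)$ together with $0\leq H_D^\alpha f\leq M$ give $T(K)\subset K$. Continuity of $T$ in the uniform norm follows from uniform continuity of $\tilde\varphi$ on bounded intervals combined with the $L^\infty$-boundedness of $G_D^\alpha$, while compactness of $T$ is delivered directly by Lemma~\ref{l221}. Schauder's theorem then produces $u\in K$ with $Tu=u$.

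The genuine obstacle is to show that this $u$, once extended by $f$ on $D^c$, is actually nonnegative, so that $\tilde\varphi(u)=\varphi(u)$ and (\ref{relation}) reduces to the desired identity. The extension is continuous across $\partial D$ because $G_D^\alpha(\rho\,\tilde\varphi(u))$ vanishes on $\partial D$ while $H_D^\alpha f=f$ there. The same integration-by-parts argument used in the proof of Lemma~\ref{l1}, which only relies on the distributional identity $\dal G_D^\alpha g=-g$ and is insensitive to the sign of $u$, turns the fixed-point identity into $\dal u=\rho\,\tilde\varphi(u)$ in $D$ in the distributional sense. On the open set $U:=\{x\in D:u(x)<0\}$ one has $\tilde\varphi(u)\equiv 0$, so $u$ is $\alpha$-harmonic, in particular $\alpha$-superharmonic, on $U$; moreover $u\geq 0$ on $U^c$ (by definition of $U$ inside $D$ and because $u=f\geq 0$ outside $D$). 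The minimum principle for $\alpha$-superharmonic functions (the same one invoked in the proof of Lemma~\ref{cprin}) then forces $u\geq 0$ throughout $U$, contradicting $U\neq\emptyset$. Since the natural operator $T$ does not a priori preserve a cone of nonnegative functions, this lower-bound step is what really has to be argued; once it is in place, $\tilde\varphi(u)=\varphi(u)$, the identity (\ref{relation}) holds on $\mathbb{R}^N$ (both sides agreeing with $f$ on $D^c$), and Lemma~\ref{l1} confirms that $u$ is a solution of (\ref{e31}).
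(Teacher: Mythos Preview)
Your proof is correct and follows essentially the same route as the paper: uniqueness via Lemma~\ref{cprin}, existence via Schauder's theorem applied to the integral operator $u\mapsto H_D^\alpha f-G_D^\alpha(\rho\cdot(\text{truncated }\varphi)(u))$, with compactness supplied by Lemma~\ref{l221}, followed by an a posteriori argument removing the truncation. The only cosmetic differences are that the paper truncates $\varphi$ two-sidedly (replacing it by the odd function $g(t)=\min(\varphi(t),\varphi(\|f\|))$ for $t\ge0$) and then invokes Lemma~\ref{cprin} directly to obtain $0\le u\le\|f\|$, whereas you truncate one-sidedly via $\tilde\varphi(t)=\varphi(t^+)$ and recover $u\ge0$ by the minimum-principle argument on $\{u<0\}$, which is in effect the proof of Lemma~\ref{cprin} specialized to the pair $(u,0)$.
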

\begin{proof}  We  observe that, by
the comparison principle (Lemma~\ref{cprin}),  problem~(\ref{e31})
possesses at most one solution.  To prove the
existence, take $f\in \cc{C}_b^+(
D^c),$ $a=\left\|f\right\|$, $M=a+\varphi(a)\sup_{x\in D} \rho(x)\sup_{x\in D}E^x[\tau_D]$
and define  $
\Lambda=\left\{u\in
\cc{C}(\overline{D});\left\|u\right\|\leq M\right\}.$
 Let $h=H_D^\alpha f$ and consider the operator
$T:\Lambda\rightarrow
\cc{C}(\overline{D})$  defined by
$$Tu(x)=h(x)-E^x\left[\int_{0}^{\tau_D}\rho(X_s)g(u(X_s))ds\right]\quad ;\quad \ x\in D,$$
where~$g$ is the real-valued odd function given by
$g(t)=\inf(\varphi(t),\varphi(a))$ for every   $t\geq 0$. Since
$\left|g(t)\right|\leq
\varphi(a)$ for every $t\in\bb{R},$ we get
$$\left|Tu(x)\right|\leq M$$
for every $x\in D$ and every $u\in\Lambda.$ This implies that
$T(\Lambda)\subset \Lambda.$ Now, let  $(u_n)_{n\geq 0}$ be a
sequence in $\Lambda$ converging uniformly to
  $u\in\Lambda$ and  let $\varepsilon >0.$ Since $g$ is uniformly continuous in
  $[-M,M],$ we deduce that there exists
    $n_0\in\mathbb{N}$ such that for every $n\geq n_0$ 
$$\left|g(u_n(X_s))-g(u(X_s))\right|<\varepsilon,\quad\text{for all } s\in[0,\tau_D].$$
It follows that for every  $n\geq n_0$ and  $x\in D$,
\begin{displaymath}
\begin{split}
\left|Tu_n(x)-Tu(x)\right|&=\left|E^x\left[\int_{0}^{\tau_D}\rho(X_s)g(u_n(X_s))ds\right]
-E^x\left[\int_{0}^{\tau_D}\rho(X_s)g(u(X_s))ds\right]\right|\\
&\leq E^x\left[\int_{0}^{\tau_D}\rho(X_s)\left|g(u_n(X_s))-g(u(X_s))\right|ds\right]\\
&\leq \varepsilon \sup_{x\in D}\rho(x) \sup_{x\in D}E^x[\tau_D].
\end{split}
\end{displaymath}
This shows that  $(Tu_n)_{n\geq 0}$ converges uniformly to $Tu.$ We
then conclude that $T$ is a continuous operator. On the other hand,
$\Lambda$ is a closed bounded convex subset of
$\cc{C}(\overline{D}).$ Moreover, in virtue of Lemma~\ref{l221},
$T(\Lambda)$ is relatively compact. Thus, the Schauder's fixed point
theorem ensures  the existence of  a function  $u\in \Lambda$ such
that $u=h-G_D^\alpha (\rho\,g(u)).$ Applying the comparison principle, it follows
that  $0\leq u\leq a$  and so  $g(u)= \varphi(u).$ We then get  immediately (\ref{relation}). Hence, the proof
is finished by Lemma \ref{l1}.
\end{proof}

\subsection{ A sufficient condition}

The unique solution to  problem (\ref{e31}) will be always   denoted
by $H_D^{\alpha,\varphi} f.$

Our purpose now constists in studying the existence of  nontrivial bounded  solutions of  the Eq. (\ref{eip})
in the whole space $\rn $ (entire solutions). 

Applying the comparison principle as stated in Lemma \ref{cprin}, we obtain the following elementary results.
\begin{lem}\label{pcg}    Let $D$ and  $D'$  be  regular  open sets such that $D'\subset\overline{D'}\subset D\subset \overline{D}\subset \rn.$ 
\benu
\item[(a)] If $f,g\in \cc{C}_b^+(D^c)$ such that $f\leq g$ then $H_D^{\alpha,\varphi}f\leq \hd g.$ 
 \item[(b)] If $u\in \cc{C}^+(\rn)$ is a supersolution of (\ref{eip}) in $\rn,$ then $\hd u\leq H_{D^{'}}^{\alpha,\varphi}u\leq u.$
 \eenu
\end{lem}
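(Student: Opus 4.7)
The plan is to reduce both statements to applications of the comparison principle (Lemma~\ref{cprin}), using as the nondecreasing Borel function $\Psi:\mathbb{R}\to\mathbb{R}$ the extension of $\varphi$ obtained by setting $\varphi(t)=0$ for $t<0$; this extension remains nondecreasing on $\mathbb{R}$, and since every function appearing below is nonnegative, the values of $\Psi$ on the negative axis are never actually used.

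For part (a), I would set $u=H_D^{\alpha,\varphi}f$ and $v=H_D^{\alpha,\varphi}g$. By Proposition~\ref{thm81}, both are bounded continuous functions on $\mathbb{R}^N$ satisfying $\dal u=\rho\varphi(u)$ and $\dal v=\rho\varphi(v)$ in $D$, with $u=f\leq g=v$ on $D^c$. A single application of Lemma~\ref{cprin}, with $v$ playing the role of supersolution and $u$ that of subsolution, yields $u\leq v$ in $\mathbb{R}^N$.

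For part (b), I would write $v=H_{D'}^{\alpha,\varphi}u$ and $w=H_D^{\alpha,\varphi}u$ and proceed in two steps. First, I would establish $w\leq u$ and $v\leq u$: in $D$ one has $\dal w=\rho\varphi(w)$, while the supersolution hypothesis gives $\dal u\leq\rho\varphi(u)$ in $D$, and $w=u$ on $D^c$, so Lemma~\ref{cprin} (with $u$ as supersolution and $w$ as subsolution) delivers $w\leq u$; the bound $v\leq u$ is obtained identically with $D'$ in place of $D$. Next, to compare $w$ and $v$, I would observe that on $(D')^c$ one has $v=u$ by definition and $w\leq u$ by the step just proved, hence $w\leq v$ on $(D')^c$. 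Since $D'\subset D$, the function $w$ also solves $\dal w=\rho\varphi(w)$ in $D'$, as does $v$ by construction, so a final invocation of Lemma~\ref{cprin} in $D'$, with $v$ as supersolution and $w$ as subsolution, gives $w\leq v$ throughout $\mathbb{R}^N$.

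No genuine analytic obstacle arises; the only care needed will be to verify at each application that both functions belong to $\mathcal{C}_b(\mathbb{R}^N)$, as demanded by Lemma~\ref{cprin}. The solutions $v$ and $w$ are bounded and continuous on $\mathbb{R}^N$ by Proposition~\ref{thm81}, while $u$, continuous and bounded on $D^c$ (as is implicit in the mere writing of $H_D^{\alpha,\varphi}u$), is bounded on the compact $\overline{D}$ by continuity, hence bounded on all of $\mathbb{R}^N$.
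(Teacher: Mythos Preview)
Your proposal is correct and follows exactly the route the paper indicates: the paper states only that the lemma is obtained ``applying the comparison principle as stated in Lemma~\ref{cprin}'' without spelling out the details, and your argument is the natural unfolding of that assertion. The two-step structure you use for part~(b)---first bounding each solution by the supersolution $u$, then comparing the two solutions on $D'$ via the exterior inequality just obtained---is precisely what is needed, and your remark on boundedness of $u$ correctly addresses the only point requiring verification before Lemma~\ref{cprin} can be invoked.
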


It is noteworthy that the monotony and not the sign of $\varphi$ is important to establish either  Lemma \ref{l1} or Lemma \ref{pcg}. So the results remain true even in the negative perturbation case that we shall consider in the next section.

In the following proposition, we give a sufficient condition for the existence of a nontrivial entire bounded solution to Eq. (\ref{eip}).
\begin{pro}\label{sb} Assume that for some $x_0\in\rn,$
\begin{equation}\label{c11}
\int_{\rn}\frac{\rho(y)}{\babsolu x_0-y\eabsolu^{N-\alpha} }\,dy<\infty.  
\end{equation}
 Then Eq. (\ref{eip}) admits a nonnegative nontrivial entire bounded solution. 
\end{pro}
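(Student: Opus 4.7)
The strategy is to exhaust $\rn$ by balls $B_n:=B(0,n)$, solve the Dirichlet problem on each $B_n$ with constant boundary data, and take a monotone limit. Fix $c>0$ and, by Proposition~\ref{thm81} applied to $D=B_n$ and $f\equiv c$, obtain $u_n:=H_{B_n}^{\alpha,\varphi}(c)\in\cc{C}_b^+(\rn)$. Since constants are $\alpha$-harmonic, $H_{B_n}^\alpha c = c$, and the representation (\ref{relation}) becomes
\[ u_n+G_{B_n}^\alpha\bp\rho\,\varphi(u_n)\ep=c, \]
so $0\le u_n\le c$ on $\rn$ by the comparison principle (Lemma~\ref{cprin}).

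For $m\ge n$ one has $u_m\le c=u_n$ on $B_n^c$, and an application of Lemma~\ref{pcg}(a) on $B_n$ (viewing $u_m$ restricted to $B_n$ as $H_{B_n}^{\alpha,\varphi}(u_m|_{B_n^c})$) yields $u_m\le u_n$ on $B_n$, whence $u_m\le u_n$ everywhere. The pointwise monotone limit $u:=\lim_n u_n\in[0,c]$ therefore exists. To verify that $u$ solves (\ref{eip}) in $\rn$, I would fix an arbitrary regular bounded domain $D$, pick $n$ with $\overline{D}\subset B_n$, write the identity of Lemma~\ref{l1} for $u_n$,
\[ u_n+G_D^\alpha\bp\rho\,\varphi(u_n)\ep=H_D^\alpha u_n, \]
and pass to the limit in $n$: the middle term is handled by dominated convergence because $\rho\,\varphi(u_n)\le\sup_D\rho\cdot\varphi(c)$ is bounded on $D$ and $G_D^\alpha(x,\cdot)\in L^1(D)$, while $H_D^\alpha u_n(x)=\int_{D^c}P_D(x,y)u_n(y)\,dy\to H_D^\alpha u(x)$ follows from $u_n\le c$ and $H_D^\alpha 1\equiv 1$. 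The limiting identity, together with the continuity of $G_D^\alpha(\rho\varphi(u))$ and $H_D^\alpha u$ on $D$, forces $u$ itself to be continuous on each such $D$, hence on $\rn$. Invoking Lemma~\ref{l1} again, $u$ is a solution in $\rn$.

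The main obstacle is to show $u\not\equiv 0$, and this is precisely where hypothesis (\ref{c11}) enters. The naive pointwise bound $u_n(x_0)\ge c-C_{N,\alpha}\varphi(c)\int\rho(y)\babsolu x_0-y\eabsolu^{\alpha-N}dy$ is useless when the ratio $\varphi(c)/c$ does not tend to $0$, so one cannot simply produce a positive lower bound for the $u_n$ pointwise. Instead, I would specialize the representation of Step~1 to $x=x_0$ and pass to the limit there. Since $G_{B_n}^\alpha(x_0,y)\uparrow G_\rn^\alpha(x_0,y)=C_{N,\alpha}\babsolu x_0-y\eabsolu^{\alpha-N}$ and $\varphi(u_n)\le\varphi(c)$, the integrand $G_{B_n}^\alpha(x_0,y)\rho(y)\varphi(u_n(y))$ is dominated by the integrable (thanks to (\ref{c11})) function $C_{N,\alpha}\varphi(c)\rho(y)\babsolu x_0-y\eabsolu^{\alpha-N}$, and dominated convergence yields
\[ u(x_0)+C_{N,\alpha}\int_{\rn}\frac{\rho(y)\,\varphi(u(y))}{\babsolu x_0-y\eabsolu^{N-\alpha}}\,dy=c. \]
Were $u\equiv 0$, then $\varphi(u)\equiv 0$ (using $\varphi(0)=0$), the integral would vanish, and we would reach the contradiction $u(x_0)=c>0$. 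Hence $u$ is a nontrivial entire bounded nonnegative solution of (\ref{eip}).
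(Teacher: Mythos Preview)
Your proof is correct and follows essentially the same approach as the paper: exhaust $\rn$ by balls, take $u_n=H_{B_n}^{\alpha,\varphi}c$, show monotonicity, pass to the limit in the local integral identity via Lemma~\ref{l1} to get a solution, and use (\ref{c11}) together with dominated convergence at $x_0$ to rule out $u\equiv 0$. The only cosmetic difference is that you deduce $u_{m}\le u_n$ from Lemma~\ref{pcg}(a) (after identifying $u_m$ on $B_n$ as $H_{B_n}^{\alpha,\varphi}(u_m|_{B_n^c})$), whereas the paper invokes Lemma~\ref{pcg}(b) directly, noting that the constant $c$ is a supersolution; both routes are equivalent.
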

\begin{proof}  Let $\lambda>0$  and define  $u_k=H_{B_k}^{\alpha,\varphi}\lambda$ for every integer $k\geq 1$ where $B_k=B(0,k) $ is the ball of center $0$ and radius $k.$ Then, by statement  (b) in Lemma \ref{pcg}, $(u_k)$ is a nonincreasing sequence of continuous functions since $\lambda$ is a supersolution of Eq. (\ref{eip}) in $\rn$. Further, by (\ref{relation}), 
\begin{equation}\label{aux3}
u_k+G_{B_k}^{\alpha}(\rho\varphi(u_k))=\lambda
\end{equation}
  for every $k\geq1.$ Besides, noting  that $u_k\leq\lambda $ for every $k\geq1,$ we deduce that  the limit function $u:=\lim_{k\avaleur\infty}u_k$ exists and is bounded above by $\lambda.$ Next, we tend to prove that $u$ is nontrivial.  Seing that for every $k\geq 1$
\[\babsolu G_{B_k}^\alpha(x_0,y)\rho(y)\varphi(u_k(y))\eabsolu\leq \varphi(\lambda) G_{\rn}^\alpha(x_0,y)\rho(y),\]
 by (\ref{c11}) and the dominated convergence theorem, letting $k$ tend to $\infty$ in (\ref{aux3}), we obtain that $u(x_0)+G^\alpha_{\rn}(\rho\,\varphi(u))(x_0)=\lambda.$  Hence $u\not\equiv 0$ since $\varphi(0)=0.$ 
   It remains to check that $u$ is a solution of Eq. (\ref{eip}). Let $D$ be  an arbitrary regular bounded domain in $\rn.$ It exists $k_0\geq1$ such that $\overline{D}\subset B_k$ for every $k\geq k_0.$ By Lemma \ref{l1}, $u_k$ satisfies the following integral equality
\begin{equation}\label{a}
u_k+G_D^\alpha(\rho\,\varphi(u_k))=H_D^\alpha u_k
\end{equation}
for every $k\geq k_0.$ Letting $k$ tend to $\infty$ in (\ref{a}) we obtain by the dominated convergence theorem that 
\[u+G_D^\alpha(\rho\,\varphi(u))=H_D^\alpha u.\]
Again, in virtue of Lemma  \ref{l1}, the arbitrariness of the domain $D$ implies that $u$ is a solution to Eq. (\ref{eip}) in the whole space $\rn$ as desired.
 
 \end{proof}
 In several places in this paper we will use the following remark.
 \begin{rmq}\label{solution}
 We would like to mention that we can learn from the above proof that, for every $\lambda>0$, $\inf_{k\geq 1}H_{B_k}^{\alpha,\varphi} \lambda$ is a nonnegative entire bounded solution of Eq. (\ref{eip})  but we do not  guarantee that it is nontrivial.
 \end{rmq}
 \subsection{The radial case}
 In this section, we shall discuss the radial case. But before, we need the following lemma which is available for $\rho$ which is not  necessarily radially symmetric. 
\begin{lem}\label{equationintegrale} Suppose that Eq. (\ref{eip}) admits a nonnegative entire bounded solution $u.$ 
  Then  for every $x\in \rn,$
 \[u(x)+G_{\rn}^\alpha(\rho\,\varphi(u))(x)=\bnorme u\enorme.\]
 \end{lem}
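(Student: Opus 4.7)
The plan is to pass to the limit $k\to\infty$ in the identity furnished by Lemma~\ref{l1} on the concentric balls $B_k=B(0,k)$. For a fixed $x\in\rn$ and $k$ large, Lemma~\ref{l1} gives
\[ u(x)+G_{B_k}^\alpha\bigl(\rho\,\varphi(u)\bigr)(x)=H_{B_k}^\alpha u(x). \]
Since $B_k\uparrow\rn$, the Green functions satisfy $G_{B_k}^\alpha(x,y)\nearrow G_{\rn}^\alpha(x,y)$; consequently the left-hand side increases to $u(x)+G_{\rn}^\alpha(\rho\,\varphi(u))(x)$ by monotone convergence, forcing the right-hand side $H_{B_k}^\alpha u(x)$ to be nondecreasing in $k$ and to converge to some limit $v(x)$ satisfying
\[ u(x)+G_{\rn}^\alpha\bigl(\rho\,\varphi(u)\bigr)(x)=v(x). \]
The bound $u\le\|u\|$ gives $H_{B_k}^\alpha u\le\|u\|$ for every $k$, so $0\le v\le\|u\|$ is bounded.

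I would then verify that $v$ is $\alpha$-harmonic on the whole space $\rn$. For any $r>0$ and any $k>r$, the function $H_{B_k}^\alpha u$ is regular $\alpha$-harmonic in $B_k$, hence (by the definition recalled in Section~2) it is $\alpha$-harmonic in $B_k$, so $H_{B_k}^\alpha u(x)=E^x\bigl[H_{B_k}^\alpha u(X_{\tau_{B_r}})\bigr]$ for every $x\in B_r$. The integrand is nonnegative and nondecreasing in $k$, so monotone convergence yields $v(x)=E^x[v(X_{\tau_{B_r}})]$ for every $x\in B_r$; since $r$ is arbitrary, $v$ is $\alpha$-harmonic on all of $\rn$.

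The final step is the Liouville theorem for the fractional Laplacian (see \cite{bogdanliouville}): every bounded $\alpha$-harmonic function on $\rn$ is constant, so $v\equiv c$ for some $c\in[0,\|u\|]$. Substituting back, $u(x)=c-G_{\rn}^\alpha(\rho\,\varphi(u))(x)$ for every $x$; taking the supremum and using that $G_{\rn}^\alpha(\rho\,\varphi(u))\ge 0$ gives $\|u\|=c-\inf_x G_{\rn}^\alpha(\rho\,\varphi(u))(x)\le c$, which together with $c\le\|u\|$ forces $c=\|u\|$ and $\inf_x G_{\rn}^\alpha(\rho\,\varphi(u))(x)=0$. This is exactly the desired equality. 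I expect the only non-routine ingredient to be the appeal to Liouville's theorem for $\alpha$-harmonic functions; the rest of the argument is two monotone-convergence applications combined with the integral representation supplied by Lemma~\ref{l1}.
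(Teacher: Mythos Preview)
Your proof is correct and follows essentially the same route as the paper: apply Lemma~\ref{l1} on the balls $B_k$, pass to the limit using monotonicity of the Green kernels, identify the limit of $H_{B_k}^\alpha u$ as a bounded entire $\alpha$-harmonic function, and invoke the Liouville theorem to conclude it is the constant $\|u\|$. The only cosmetic differences are that the paper obtains the monotonicity of $k\mapsto H_{B_k}^\alpha u$ directly from the $\alpha$-subharmonicity of $u$, and identifies the constant by citing that $G_{\rn}^\alpha(\rho\,\varphi(u))$ is a potential (hence has infimum zero), whereas you deduce both facts from the integral identity itself.
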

 \begin{proof} By Lemma \ref{l1}, for every $k\geq 1$ we have
 \begin{equation}\label{0}
 u(x)+G_{B_k}^\alpha(\rho\,\varphi(u))(x)=H_{B_k}^\alpha u(x)\quad;\quad x\in\rn.
 \end{equation}
 One easily observe that $(H_{B_k}^\alpha u)_{k\geq1}$ is  uniformily bounded  above by $\bnorme u\enorme$. Since, $u$ is $\alpha$-subharmonic in $\rn,$ it follows that $H_{B_k}^\alpha u\leq H_{B_{k+1}}^\alpha u$ for every $k\geq1.$ Consequently, the limit function $h:=\sup_{k\geq 1} H^{\alpha}_{B_k}u$ exists and it is a $\alpha$-harmonic function in the whole space $\rn,$ which  in turn means, by the Liouiville property \cite{bogdanliouville,chenliouville}, that $h=c$ for some  nonnegative constant $c\in\rr$. Letting $k$ tend to $\infty$ in (\ref{0}) and recalling that $\sup_{k\geq 1}G_{B_k}^\alpha=G_{\rn}^\alpha$, we obtain   
 \[v:=G_{\rn}^\alpha(\rho\,\varphi(u))=c-u\qquad\text{in }\rn.\]
 Since $v$ is a potential in $\rn$, we deduce that $\displaystyle{\inf_{x\in\rn} v(x)=0}$ and so $\displaystyle{c-\sup_{x\in\rn}u(x)=0}$ which completes the proof.
 \end{proof}
 We notice, under the hypothesis $\lim_{\babsolu x\eabsolu\to\infty}G_{\rn}^\alpha \rho(x)=0,$ that every nonnegative bounded solution to Eq. (\ref{eip}) admits a limit in $\infty,$ namely, $\lim_{\babsolu x\eabsolu\to\infty}u(x)=\bnorme u\enorme.$ 
 
 The proof of the following theorem uses elements of the corresponding proof from \cite{khalifabounded}. Nevertheless, the maximum principle exploited there for radially symmetric subharmonic functions (relative to the classical Laplacian) does not apply for those relative to fractional Laplacian.
 \begin{thm}\label{radial} Assume that $\rho$ is radially symmetric on $\rn.$ Then the following statements are equivalent. 
 \benu
 \item[(a)] $\displaystyle{\int_0^\infty r^{\alpha-1}\rho(r)\,dr<\infty.}$
 \item[(b)] It exists $x_0\in\rn$ such that $G_{\rn}^\alpha\rho(x_0)<\infty.$
 \item[(c)] Eq.(\ref{eip}) admits a nonnegative nontrivial entire bounded solution.
 \item[(d)] Eq.(\ref{eip}) admits a nonnegative nontrivial entire bounded solution which is radially symmetric.
 \eenu
 \end{thm}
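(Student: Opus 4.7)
My plan is to establish the cyclic chain (a) $\iff$ (b) $\Rightarrow$ (d) $\Rightarrow$ (c) $\Rightarrow$ (d) $\Rightarrow$ (a).

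\textbf{The easy directions.} The equivalence (a) $\iff$ (b) is a polar-coordinate computation: $G_{\rn}^\alpha\rho(0) = C_{N,\alpha}\,\omega_{N-1}\int_0^\infty r^{\alpha-1}\rho(r)\,dr$, and for a general $x_0$ one splits $G_{\rn}^\alpha\rho(x_0)$ into an integral over $\{|y|\le 2|x_0|\}$ (finite by local boundedness of $\rho$ and local integrability of the Riesz kernel) and one over $\{|y|>2|x_0|\}$ (comparable to the previous, since $|x_0-y|\asymp|y|$ there). For (b) $\Rightarrow$ (c) I would quote Proposition \ref{sb}; observing that both $\rho$ and each $B_k$ are rotation invariant, the uniqueness part of Proposition \ref{thm81} forces $H_{B_k}^{\alpha,\varphi}(\lambda)$ to be radial, and so is the decreasing limit, upgrading this to (b) $\Rightarrow$ (d). The implication (d) $\Rightarrow$ (c) is trivial.

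\textbf{(c) $\Rightarrow$ (d).} Let $u$ be a nontrivial bounded solution and $M:=\|u\|$. Set $u_k:=H_{B_k}^{\alpha,\varphi}(M)$: each $u_k$ is radial by the same uniqueness argument, Lemma \ref{pcg}(b) applied to the constant supersolution $M$ shows $(u_k)$ is nonincreasing, and Lemma \ref{cprin} gives $u\le u_k$ on $\rn$ since $u\le M=u_k$ on $B_k^c$. Hence $v:=\lim_k u_k\ge u$ is radial, bounded, and nontrivial. To check $v$ solves (\ref{eip}) I would fix any regular bounded $D$ with $\overline D\subset B_{k_0}$ and pass to the limit in the identity $u_k+G_D^\alpha(\rho\varphi(u_k))=H_D^\alpha u_k$ of Lemma \ref{l1}: since $u_k\le M$, $\rho$ is bounded on $\overline D$, and $\int_{D^c}P_D(x,y)\,dy\le 1$, dominated convergence yields $v+G_D^\alpha(\rho\varphi(v))=H_D^\alpha v$ for every such $D$, and Lemma \ref{l1} gives (d).

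\textbf{(d) $\Rightarrow$ (a), the main obstacle.} Let $v$ be a radial nontrivial bounded solution and $M:=\|v\|$. Lemma \ref{equationintegrale} evaluated at $x=0$ and expanded in polar coordinates yields the key integral bound
\[
C_{N,\alpha}\,\omega_{N-1}\int_0^\infty r^{\alpha-1}\rho(r)\,\varphi(v(r))\,dr=M-v(0)<\infty.
\]
The classical proof in \cite{khalifabounded} concludes by using that every radial subharmonic function (with $\alpha=2$) is nondecreasing in $|x|$, so $v(r)\ge v(0)>0$ and the positive factor $\varphi(v(0))$ can be pulled out of the integral. This monotonicity fails for $\alpha<2$ and must be replaced. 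My substitute would proceed in three steps. First, prove $v>0$ everywhere: the bounded $\alpha$-superharmonic function $M-v=G_{\rn}^\alpha(\rho\varphi(v))$ would attain its global supremum $M$ at any point where $v$ vanishes, and the strong maximum principle for $\alpha$-superharmonic functions would then force it to be constant, i.e.\ $v\equiv 0$. Second, exploit the particular $v=\inf_k u_k$ constructed above: using the explicit formula (\ref{green}) for $G_{B_k}^\alpha$ one derives the lower bound $G_{B_k}^\alpha(0,y)\ge c\,|y|^{\alpha-N}$ on $\{|y|\le k/2\}$ with $c$ independent of $k$, which combined with $u_k(0)+G_{B_k}^\alpha(\rho\varphi(u_k))(0)=M$ gives the uniform estimate $M-u_k(0)\ge c'\int_0^{k/2}r^{\alpha-1}\rho(r)\,\varphi(u_k(r))\,dr$. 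Third, deduce that $\varphi(v(r))$ is bounded below by a positive constant for all $r$ sufficiently large, which, once inserted into the displayed identity, yields (a). The last step is the essential difficulty: it amounts to preventing $v$ from oscillating down to zero at infinity, and will require a delicate combination of the $\alpha$-submean-value inequality $v(0)\le E^0[v(X_{\tau_{B_R}})]$ with the explicit Poisson kernel of $B_R$ to compensate for the lost classical radial monotonicity.
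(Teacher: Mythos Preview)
Your treatment of (a)$\iff$(b), (b)$\Rightarrow$(d), and (c)$\Rightarrow$(d) is essentially the paper's: the radiality of $H_{B_k}^{\alpha,\varphi}\lambda$ via uniqueness, the monotone limit $v=\inf_k u_k$, and the passage through Lemma~\ref{l1} all match.

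The gap is in (d)$\Rightarrow$(a). You correctly isolate the obstacle---one needs $\varphi(v(r))\ge c>0$ for all large $r$, and radial $\alpha$-subharmonic functions need not be monotone---but your proposed three-step workaround is left incomplete. Step~1 (strict positivity) is fine, Step~2 only reproduces at $x=0$ what Lemma~\ref{equationintegrale} already gives, and Step~3 is the whole problem: you write that preventing $v$ from oscillating to zero ``will require a delicate combination of the $\alpha$-submean-value inequality with the explicit Poisson kernel,'' but no such combination is actually carried out. As it stands this is a restatement of the difficulty, not a resolution.

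The paper sidesteps this entirely with a single external input. From Lemma~\ref{equationintegrale} one has $\|v\|-v = G_{\rn}^\alpha(\rho\varphi(v))$, and the right-hand side is a \emph{radially symmetric} Riesz potential of a nonnegative function. A result of Mizuta \cite[Theorem~2]{mizuta} asserts that any such potential tends to $0$ along radii at infinity. Hence $v(r)\to\|v\|$ as $r\to\infty$, so $v(r)\ge\tfrac12\|v\|$ for $r\ge r_0$, and the factor $\varphi(\tfrac12\|v\|)>0$ can be pulled out of the tail integral $\int_{r_0}^\infty r^{\alpha-1}\rho(r)\varphi(v(r))\,dr$. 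The piece $\int_0^{r_0}$ is finite because $\rho$ is locally bounded. This replaces your missing Step~3 in one line and makes the Poisson-kernel analysis unnecessary.
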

 \begin{proof} Using the spherical coordinates, we easily get that
 \[G_{\rn}^\alpha(0)=C_{N,\alpha}\int_0^\infty r^{\alpha-1}\rho(r)\,dr\]
 and then (a) implies (b)  holds, while (b) implies (c) is already obtained in proposition \ref{sb}. To prove (c) implies (d), let $u$ be a nontrivial entire bounded solution of Eq. (\ref{eip}) and choose $\lambda\geq \bnorme u\enorme.$ Define for every $k\geq 1,$ $v_k=H_{B_k}^{\alpha,\varphi}\lambda.$  It is not hard to see that $v_k$ is radially symmetric but we will spell out the details. By proposition \ref{thm81} 
 \begin{equation}\label{int}
 v_k(x)=\lambda-G_{B_k}^\alpha\bp\rho\,\varphi(v_k)\ep (x)\quad ;\quad x\in\rn.
 \end{equation} 
 Let $\kappa$ be an orthogonal transformation in $\rn.$ Recalling the explicit formula (\ref{green}) of the Green function in the ball, we get that $G_{B_k}^\alpha\bp\rho\,\varphi(v_k)\ep\bp \kappa(x)\ep=G_{B_k}^\alpha\bp\rho\,\varphi(v_k\circ\kappa)\ep\bp x\ep$ for every $x\in\rn$ and consequently, by (\ref{int}), we have
 \[v_k\circ\kappa(x)=\lambda-G_{B_k}^\alpha\bp\rho\,\varphi(v_k\circ\kappa)\ep\bp x\ep\quad;\quad x\in\rn.\] 
 The uniqueness of nonnegative solution of the problem (\ref{e31}) yields that $v_k=v_k\circ\kappa$ and hence $v_k$ is  radially symmetric. On the other hand, by Lemma \ref{pcg} we get that $u\leq v_{n+1}\leq v_n\leq \lambda$ for every $k\geq 1$ . Put $v:=\inf_{k\geq 1}v_k.$ Then $v$ is radially symmetric and bounded below by $u$ which is non identically zero. Also, by Remark \ref{solution} $v$ is an entire solution of Eq. (\ref{eip}). Finally, to prove that (d) implies (a), let $v$ be a nontrivial entire bounded radially symmetric solution of Eq. (\ref{eip}). Then in virtue of Lemma 
 \ref{equationintegrale}, we have    
 \[v(x)+G^\alpha_{\rn}(\rho\varphi(v))(x)=\bnorme v\enorme\quad;\quad x\in \rn,\]
 and in particular,
 \begin{equation}\label{e1}
 v(0)+c\int_{0}^{\infty}r^{\alpha-1}\rho(r)\varphi(v(r))\,dr=\bnorme v\enorme,
 \end{equation}
 for some constant $c>0.$ Moreover, by \cite[Theorem 2]{mizuta}, $\lim_{\babsolu x\eabsolu\to\infty} G^\alpha_{\rn}(\rho\varphi(v))(x)=0$ since $G^\alpha_{\rn}(\rho\varphi(v))$ is radially symmetric. Hence, $\lim_{\babsolu x\eabsolu\to\infty}v(x)=\bnorme v\enorme.$ It follows that it exists $r_0>0$ such that $v(r)\geq \frac{1}{2}\bnorme v\enorme$ for every $r\geq r_0.$ Then, according to (\ref{e1}), we get
 \begin{displaymath}
 \begin{split}
 C_{N,\alpha}\varphi(\frac{1}{2}\bnorme v\enorme)\int_{r_0}^\infty r^{\alpha-1}\rho(r)\,dr&\leq c\int_{r_0}^\infty r^{\alpha-1}\rho(r)\varphi(v(r))\,dr\\
 &\leq c\bp\bnorme v\enorme-v(0)\ep<\infty.\end{split}
 \end{displaymath} The fact that $\rho$ is locally bounded yields that $\int_0^{r_0}r^{\alpha-1}\rho(r)\,dr<\infty$ and hence, (d) implies (a) holds.
 \end{proof}
 \subsection{Transient sets}
 We have seen above that (\ref{c11}) is a sufficient condition for the existence of a bounded solution to (\ref{eip}) and that in the radial case it is a necessary condition as well. A natural question which needs to be raised here if (\ref{c11}) is necessary when $\rho$ is not radially symmetric. To settle this question, we need some preparation. So one can see this section as a " addendum" but  we shall prove on the way a much more general result which is of interest in itself.
 
   We shall first  clarify some terminology. Let $A$ be a Borel set and let $T_A$ be the first hitting time of  $A$
\[T_A:=\bacc t>0\ ;\ X_{t}\in A\eacc.\]
The  set $A$ is said to be $\alpha$-recurrent if $P^x(T_A<\infty)=1$ for every $x\in \rn$ and $\alpha$-transient otherwise, that is if  there exists $x_0\in\rn$ such that $P^{x_0}(T_A<\infty)\neq 1$ (\cite[p. 24]{10} or \cite[p. 121]{chung}).
Let $u$ be a positive $\alpha$-superharmonic function in  $\rn.$  We denote  the set of all  nonnegative $\alpha$-superharmonic functions in $\rn$ by $\mathcal{S}^+. $ The regularized reduced function (or balayage) of $u$ relative to $A$ in $\rn$ is given by
\[\hat{R}_u^A(x)=\liminf_{y\avaleur x}R_u^A(y)\quad;\quad x\in \rn,\]
where \begin{displaymath}
\begin{split}
R_u^A(x)&= \inf \bacc v(x)\ ;\ v\in \mathcal{S}^+\text{ and } v\geq u \text{ on } A\eacc\\
&= \inf \bacc v(x)\ ;\ v\in \mathcal{S}^+,\ v=u\ \text{on }A\text{ and } v\leq u \text{ on } \rn\eacc.
\end{split}
\end{displaymath}
It is well known (see \cite[p. 263]{hansen} or \cite[p. 231]{generaltheory}) that  
\[\hat{R}_u^A(x)=P_{T_A}u(x):=E^x\bint u(X_{T_A})\ ;\ T_A<\infty\eint\quad;\quad x\in \rn.\]
Hence the following assertions are obviously equivalent.
\begin{description}
\item[(a)] $A$ is transient.
\item[(b)] For some $\lambda>0,$ $\hat{R}_\lambda^A\neq \lambda.$ 
\item[(c)] For every $\lambda>0,$ $\hat{R}_\lambda^A\neq \lambda.$ 
\item[(d)] For every $\lambda>0,$ there exists $s\in\cc{S}^+$ such that $s\geq\lambda$ on $A$ and $s\not\geq \lambda$ on $\rn$  (that is $A$ is thin at $\infty$ in the sense of \cite[p. 215]{armitage}).
\item[(e)] For every $\lambda>0,$ there exists $s\in\cc{S}^+$ such that $s=\lambda$ on $A$ and $s(x_0)< \lambda$ for some $x_0\in\rn.$
\end{description}
Although, as evoked in the introduction, many properties of the classical case $'\alpha=2'$ can be more or less readily extended for $0<\alpha<2,$ it seems that there are no one clear reason for a $2$-transient set to be   $\alpha$-transient. One may prove this result stochastically by analysing closely the properties of the $\alpha$-stable process. Nevertheless, the approach that we have adopted consists of verifying that every $2$-superharmonic function is $\alpha$-superharmonic as weel and this fact  is clearly much more than that we need to prove and gives answer to our question. We believe that this approach is more relevant for the framework of our paper.
\begin{thm}
 Let $u$ be a nonnegative function  in $L^\infty_{loc}(\rn)\cap\mathcal{L}_\alpha. $ 
If $u$ is $2$-superharmonic on $\rn$ then $u$ is $\alpha$-superharmonic on $\rn.$
\end{thm}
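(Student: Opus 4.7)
The plan is to exploit the Bochner subordination of the symmetric $\alpha$-stable process to Brownian motion. Writing $X_t=W_{T_t}$, where $W$ is the Brownian motion with generator $\Delta$ (in the paper's normalisation) and $T_t$ is an independent $(\alpha/2)$-stable subordinator, one has $(|\xi|^2)^{\alpha/2}=|\xi|^\alpha$ for the characteristic exponents, so the time-change transports potential-theoretic properties from $W$ to $X$. The strategy proceeds in three steps: first show that $u$ is excessive for the Brownian semigroup, then subordinate, then convert excessiveness into the exit-time inequality required by the paper's definition of $\alpha$-superharmonicity.

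For the first step, the Gauss kernel $(4\pi t)^{-N/2}e^{-|y-x|^2/(4t)}$ is a radial function of $|y-x|$, so disintegration onto concentric spheres yields
\begin{equation*}
P_t^W u(x)=\int_0^\infty q(t,r)\int_{\partial B_r(x)}u(z)\,dS(z)\,dr,
\end{equation*}
with $q(t,r)=(4\pi t)^{-N/2}e^{-r^2/(4t)}$. Applied to the lower semicontinuous representative guaranteed by $2$-superharmonicity, the classical spherical mean inequality gives $\int_{\partial B_r(x)}u\,dS\le |\partial B_r|\,u(x)$ for every $r>0$; integrating in $r$ yields $P_t^W u\le u$. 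Finiteness of $P_t^W u(x)$ is ensured because the Gaussian tail dominates the weight $(1+|y|)^{-(N+\alpha)}$ controlling $u\in\mathcal{L}_\alpha$. Conditioning on $T_t$ and invoking independence,
\begin{equation*}
P_t^X u(x)=E^x\bigl[u(W_{T_t})\bigr]=\int_0^\infty P_s^W u(x)\,\eta_t(ds)\le u(x),
\end{equation*}
where $\eta_t$ denotes the law of $T_t$; hence $u$ is $\alpha$-excessive.

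To convert excessiveness into $\alpha$-superharmonicity, fix a bounded open $U\subset\mathbb{R}^N$. The strong Markov property together with the inequality $P_t^X u\le u$ makes $\bigl(u(X_{t\wedge\tau_U})\bigr)_{t\ge 0}$ a nonnegative $(P^x,\mathcal{F}_t)$-supermartingale. Since $\tau_U<\infty$ almost surely for bounded $U$ and $X_{t\wedge\tau_U}\to X_{\tau_U}$, Fatou's lemma applied to the supermartingale inequality yields both $E^x[u(X_{\tau_U})]\le u(x)<\infty$ and the required bound $u(x)\ge E^x[u(X_{\tau_U})]$, proving $\alpha$-superharmonicity. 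The step I expect to be the most delicate is the very first one, namely justifying $P_t^W u\le u$ for a function that lies merely in $L^\infty_{\mathrm{loc}}\cap\mathcal{L}_\alpha$ and is $2$-superharmonic only in the distributional sense: one must first pass to the lower semicontinuous representative and then verify carefully the spherical mean inequality and the integrability needed to disintegrate the heat kernel. Once this is done, the subordination identity and the supermartingale argument are routine.
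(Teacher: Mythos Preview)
Your argument is correct and takes a genuinely different route from the paper. The paper proceeds analytically: for $u\in C^2$, it disintegrates the singular kernel $|y-x|^{-N-\alpha}$ over spheres, so that
\[
\dal u(x)=c_{N,\alpha}\int_0^\infty r^{-\alpha-1}\Bigl(\int_{S^{N-1}}u(x+r\omega)\,d\sigma(\omega)-\sigma(S^{N-1})u(x)\Bigr)\,dr\le 0
\]
by the sub-mean-value property, and then handles the general case by mollifying with a radial approximate identity $\phi_n$, showing $u_n:=u*\phi_n\le u$, $u_n\to u$, and $\Delta u_n\le 0$, whence $H_D^\alpha u_n\le u_n$ and one passes to the limit. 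Your approach instead uses the probabilistic structure: the same spherical disintegration, but applied to the Gaussian kernel to obtain $P_t^W u\le u$, followed by Bochner subordination $P_t^X u=\int P_s^W u\,\eta_t(ds)\le u$ and an optional-stopping/Fatou argument to recover the exit-time inequality.

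Both proofs ultimately rest on the spherical mean inequality for $2$-superharmonic functions; the difference is where one spends the effort. The paper's mollification step is entirely elementary and yields the pointwise distributional inequality $\dal u\le 0$ along the way, which is slightly sharper. Your subordination argument is more conceptual and immediately generalises: the same reasoning shows that a $2$-excessive function is excessive for \emph{any} subordinate Brownian motion, not only the $\alpha$-stable one. Your self-identified delicate point (passing to the lower semicontinuous representative and justifying $P_t^W u\le u$ for $u\in L^\infty_{\mathrm{loc}}\cap\mathcal{L}_\alpha$) is indeed the crux, but it is handled by Tonelli and the fact that the paper's exit-time definition of $2$-superharmonicity already forces the classical sub-mean-value property; the remaining supermartingale step is routine, and your observation that $X_{t\wedge\tau_U}=X_{\tau_U}$ eventually (since $\tau_U<\infty$ a.s.) makes the Fatou passage clean.
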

\begin{proof}
We split the proof into two steps. In the first place we suppose additionally that $u\in C^2(\rn)$.  Then for every $x\in \rn$ we have
\[
\dal u(x)=c_{N,\alpha}\int_{\rn}\frac{u(x+y)-u(x)}{|y|^{N+\alpha}}\,dy.
\]
Let $S^{N-1}$ denotes the unit sphere of $\rn$ and let $\sigma$ denotes the surface area measure on $S^{N-1}$.
Using spherical coordinates in $\rn$, we get
\[
\dal u(x)=c_{N,\alpha}\int_0^\infty\frac{w(x,r)}{r^{\alpha+1}}\,dr,
\]
where
\[
w(x,r)= \int_{S^{N-1}}u(x+ry)\sigma(dy)-\sigma\left(S^{N-1}\right)\,u(x).
\]
Since $u$ is $2$-superharmonic on $\rn$, we have $w(x,r)\leq0$ for all $r\geq 0$ and for all $x\in \rn$. Therefore, $\dal u\leq0$ on $\rn$ which means that $u$ is $\alpha$-superharmonic on $\rn$. Now, we turn to the general case where $u$ is lower semi-continuous on $\rn$ and not necessarily of class $C^2$.
 Obviously, in order to prove that $u$ is $\alpha$-superharmonic on $\rn$, it is sufficient to show that $H_D^\alpha u\leq u$  for every  regular bounded open set $D$. Consider  the approximate identity $\phi$ defined on $\rn$ by
 \[
 \phi(x)= c\,e^{\frac{1}{|x|^2-1}}\; \textrm{ if }\; |x|<1\;\;\textrm{ and }\;\; \phi(x)=0\;\textrm{ if }\; |x|\geq 1,
 \]
 where the constant $c>0$ is chosen so that $\int_{\rn}\phi(x)\,dx=1$. For every $n\geq 1$, let $\phi_n$ be the function defined on $\rn$ by $\phi_n(x)=n^N\phi(nx)$. Obviously, for every $n\geq 1$, $\phi_n\in \mathcal{C}_c^\infty(\rn)$ and with support in the closed ball $\overline{B}(0, 1/n)$. Next, for every $n\geq 1$, we define
 $$
 u_n(x)=\int_{\rn}u(y)\phi_n(x-y)\,dy\quad;\quad x\in \rn.
 $$
 Using the spherical coordinates and the fact that $u$ is $2$-superharmonic  we get, for every $x\in \rn$,
\begin{eqnarray*}
 u_n(x)&=&\int_{\rn}u(x-y)\phi_n(y)\,dy\\
 &=&\int_0^{\frac 1n}t^{N-1}\left(\int_{S^{N-1}}u(x-tz)\sigma(dz)\right)\phi_n(t)\,dt\\
&\leq & u(x)\,\sigma\left(S^{N-1}\right)\,\int_0^{\frac 1n}t^{N-1}\phi_n(t)\,dt \\
&=& u(x).
\end{eqnarray*}
This shows in particular that $u_n\in\mathcal{L}_\alpha$ for every $n\geq1.$
Also, it follows that $\limsup_nu_n(x)\leq u(x)$ for all $x\in \rn$.
 On the other hand, 
  using Fatou's Lemma and the fact that $u$ is lower semi-continuous on $\rn$, we obtain
 $$
 \liminf_nu_n(x)=\liminf_n\int_{\rn}u(x-\frac yn)\phi(y)\,dy\geq u(x).
 $$ 
  Hence, for every $x\in \rn$, $\liminf_nu_n(x)=\limsup_nu_n(x)=u(x)$ which means that the sequence $(u_n(x))_n$ converges to $u(x)$.
 Since $u\in L^\infty_{loc}(\rn)$ and $\phi_n$ and all its partial derivatives are bounded on $\rn$ and vanish outside $B(0,1/n)$, we see that $u_n\in C^\infty(\rn)$. Also, for every $x\in \rn$,
 $$
 \Delta u_n(x)=\int_{\rn} u(y) \Delta(\phi_n(\cdot-y))(x)\,dy=\int_{\rn}u(y)\Delta(\phi_n(\cdot-x))(y)\,dy.
 $$
 This implies that $\dal u_n(x)\leq0$ since,
by  hypothesis, $u$ is $2$-superharmonic on $\rn$ which is equivalent to $\Delta u\leq 0$ in the distributional sense. Thus it follows from the first step that $\dal u_n(x)\leq 0$ for every $x\in \rn$, or equivalently, for every bounded regular open set $D$
$$
H_D^\alpha u_n\leq u_n \;\textrm{ on }\;\rn.
$$
Hence, letting $n$ tend to $\infty$, we deduce  that $H_D^\alpha u\leq  u$ since $u_n\to u$  on $\rn$ as $n\to \infty$.
\end{proof}
The following important result is an immediate consequence of the above theorem.
\begin{cor}
Every  $2$-transient set $A\subset\rn$   is $\alpha$-transient.
\end{cor}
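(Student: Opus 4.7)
The plan is to deduce the corollary by combining the preceding theorem with the characterizations (a)--(e) of transience, applied in both the classical ($\alpha=2$) and $\alpha$-stable regimes. The idea is that $2$-transience supplies a nonnegative bounded $2$-superharmonic witness function on $\rn$, the theorem upgrades it into an $\alpha$-superharmonic witness, and this same witness then certifies that $A$ is $\alpha$-transient.

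In detail, starting from the assumption that $A$ is $2$-transient, the classical analogue of the equivalence (a) $\Leftrightarrow$ (d), with Brownian motion in place of the $\alpha$-stable process, yields a nonnegative $2$-superharmonic function $s$ on $\rn$ and a point $x_0\in\rn$ with $s\geq 1$ on $A$ and $s(x_0)<1$. To fit this function into the hypothesis class of the theorem, I would truncate and set $\tilde{s}:=\min(s,1)$. Since constants are $2$-superharmonic and the minimum of two $2$-superharmonic functions is again $2$-superharmonic, $\tilde{s}$ remains $2$-superharmonic; it still satisfies $\tilde{s}=1$ on $A$ and $\tilde{s}(x_0)=s(x_0)<1$. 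Being bounded, $\tilde{s}$ lies in $L^\infty_{loc}(\rn)\cap\mathcal{L}_\alpha$, as bounded Borel functions belong to $\mathcal{L}_\alpha$ by the remark in Section~2.

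Applying the theorem to $\tilde{s}$ gives that $\tilde{s}$ is $\alpha$-superharmonic on $\rn$. Hence $\tilde{s}$ is a nonnegative $\alpha$-superharmonic function with $\tilde{s}\geq 1$ on $A$ and $\tilde{s}(x_0)<1$; this realizes condition (d) in the $\alpha$-setting with $\lambda=1$, so $A$ is $\alpha$-transient, as required.

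The conceptual content lies in the theorem itself; at this stage the argument is essentially bookkeeping. The only point requiring any care is the preservation of $2$-superharmonicity under truncation by $\min(\cdot,1)$, which is a standard property of the classical superharmonic cone. Everything else is an immediate translation of a classical witness into an $\alpha$-witness through the theorem.
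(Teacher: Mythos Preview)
Your proposal is correct and matches the paper's intent: the paper gives no explicit argument, recording the corollary simply as ``an immediate consequence of the above theorem.'' Your write-up is a fully spelled-out version of that immediate consequence, and the truncation $\tilde{s}=\min(s,1)$ is a clean device to force the classical witness into $L^\infty_{loc}(\rn)\cap\mathcal{L}_\alpha$ so that the theorem applies.
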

\begin{exemple}\label{exemple1}
Assume that $N>3$. Let $\beta>1/(N-3)$ and let $h(r)=r/(\ln r)^\beta$ for $r\geq e$. Consider the thorn $A$ given by
$$
A=\{(x_1,...,x_N)\in\rn\ ;\  x_1\geq e\;\textrm{ and }\; x_2^2+\cdots+ x_N^2\leq h^2(x_1)\}.
$$
It was shown in \cite[Proposition 3.3.6]{10} that $A$ is $2$-transient. Therefore, by the above theorem, the thorn $A$ is also $\alpha$-transient.
\end{exemple}
 \subsection{The general case}
 In this section we shall discuss the general case when $\rho$ is not necessarily radially symmetric. The proof of the main result requires some additional preparation.
 \begin{lem}\label{sommerho} Let $\rho_1,\rho_2$ be nonnegative locally bounded functions in $\rn.$ Suppose that Eq. (\ref{eip}) admits a nonnegative nontrivial entire bounded solution for $\rho=\rho_1$ and for $\rho=\rho_2.$ Then,  Eq.  (\ref{eip}) admits a such solution for $\rho=\rho_1+\rho_2.$
\end{lem}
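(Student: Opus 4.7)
I would adapt the Perron scheme of Proposition~\ref{sb} by using the boundary datum $\mu := u_1\wedge u_2$, where $u_i$ denotes the given nontrivial bounded solution for $\rho_i$. Three preparatory facts justify this choice. First, the strong minimum principle applied to $-u_i$, which is $\alpha$-superharmonic because $\dal u_i=\rho_i\varphi(u_i)\geq 0$, forces $u_i>0$ on $\rn$, so $\mu>0$. Second, a direct pointwise evaluation of $\dal\mu$ via the singular-integral formula yields, at any $x$ with $u_1(x)\leq u_2(x)$, the bound
\[
\dal\mu(x)\leq \dal u_1(x)=\rho_1(x)\varphi(\mu(x))\leq(\rho_1+\rho_2)(x)\varphi(\mu(x)),
\]
since $\mu\leq u_1$ globally while $\mu(x)=u_1(x)$; thus $\mu$ is a supersolution of $\dal u=(\rho_1+\rho_2)\varphi(u)$ on $\rn$. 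Third, monotonicity of $\varphi$ gives $(\rho_1+\rho_2)\varphi(\mu)\leq \rho_1\varphi(u_1)+\rho_2\varphi(u_2)$, and applying Lemma~\ref{equationintegrale} to each $u_i$ furnishes the crucial global bound
\[
G_{\rn}^\alpha\bp(\rho_1+\rho_2)\varphi(\mu)\ep\leq\bnorme u_1\enorme+\bnorme u_2\enorme<\infty.
\]

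For each $k\geq 1$, let $v_k$ denote the unique nonnegative solution in $B_k$ of $\dal u=(\rho_1+\rho_2)\varphi(u)$ with boundary data $\mu$ on $B_k^c$ provided by Proposition~\ref{thm81} (applied with $\rho_1+\rho_2$ in place of $\rho$). Because $\mu$ is a supersolution, Lemma~\ref{pcg}(b) forces $(v_k)$ to be decreasing with $v_k\leq\mu$, and Lemma~\ref{l1} yields
\[
v_k+G_{B_k}^\alpha\bp(\rho_1+\rho_2)\varphi(v_k)\ep=H_{B_k}^\alpha\mu.
\]
Set $v:=\lim_k v_k$. For any regular bounded $D\subset\rn$ and any $k$ large enough that $\overline{D}\subset B_k$, Lemma~\ref{l1} gives the analogous identity with $B_k$ replaced by $D$; passing to $k\to\infty$ with the third preparatory bound serving as the dominating function in the dominated convergence theorem produces $v+G_D^\alpha((\rho_1+\rho_2)\varphi(v))=H_D^\alpha v$, and Lemma~\ref{l1} then identifies $v$ as a nonnegative bounded entire solution of $\dal u=(\rho_1+\rho_2)\varphi(u)$.

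To rule out $v\equiv 0$, I would use the decomposition $\mu=(u_1+u_2)-(u_1\vee u_2)$. Each of $u_1$, $u_2$, $u_1+u_2$, and $u_1\vee u_2$ is a nonnegative bounded $\alpha$-subharmonic function on $\rn$ (the last follows from $H_{B_k}^\alpha(u_1\vee u_2)\geq H_{B_k}^\alpha u_i\geq u_i$ for $i=1,2$). For any such $w$ the sequence $(H_{B_k}^\alpha w)_k$ is increasing, bounded above by $\bnorme w\enorme$ and below by $w$, and by the Liouville argument used in the proof of Lemma~\ref{equationintegrale} converges to the constant $\bnorme w\enorme$. Linearity of $H_{B_k}^\alpha$ then gives
\[
\lim_k H_{B_k}^\alpha\mu=\bnorme u_1+u_2\enorme-\bnorme u_1\vee u_2\enorme=\bp\bnorme u_1\enorme+\bnorme u_2\enorme\ep-\max(\bnorme u_1\enorme,\bnorme u_2\enorme)=\min(\bnorme u_1\enorme,\bnorme u_2\enorme)>0,
\]
so the limiting integral identity reads $v+G_{\rn}^\alpha((\rho_1+\rho_2)\varphi(v))=\min(\bnorme u_1\enorme,\bnorme u_2\enorme)$, which excludes $v\equiv 0$ since $\varphi(0)=0$. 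The main technical point is the passage from the pointwise inequality for $\dal\mu$ to the distributional supersolution required by Lemma~\ref{pcg}(b), which I would handle through a standard convolution-approximation argument based on the continuity of $u_1$ and $u_2$.
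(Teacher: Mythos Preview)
Your overall strategy is sound and differs substantially from the paper's, but two of your three ``preparatory facts'' have problems. The first is wrongly justified: $-u_i$ is indeed $\alpha$-superharmonic, but if $u_i(x_0)=0$ then $-u_i$ attains its \emph{maximum} there, not its minimum, so no strong minimum principle applies. Fortunately the conclusion $\mu>0$ is never actually used---nontriviality of $v$ comes from the limiting identity in your final paragraph. The genuine gap is the second fact. The pointwise computation of $\dal\mu$ is purely formal: $\mu=\min(u_1,u_2)$ is merely continuous, not $C^2$, so the singular integral need not converge, and the ``standard convolution-approximation'' you propose does not obviously rescue it, because mollifying $\mu$ destroys the pointwise $\min$ structure on which the inequality rests. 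There is, however, a clean bypass using only the paper's lemmas: since $\dal u_i=\rho_i\varphi(u_i)\leq(\rho_1+\rho_2)\varphi(u_i)$ holds \emph{distributionally}, each $u_i$ is already a distributional supersolution for the combined $\rho$, so Lemma~\ref{pcg}(b) gives $H_D^{\alpha,\varphi}u_i\leq u_i$; then Lemma~\ref{pcg}(a) with $\mu\leq u_i$ on $D^c$ yields $H_D^{\alpha,\varphi}\mu\leq u_i$ for $i=1,2$, hence $H_D^{\alpha,\varphi}\mu\leq\mu$. This alone delivers the monotonicity of $(v_k)$ and the bound $v_k\leq\mu$, so whether $\mu$ itself is a distributional supersolution becomes irrelevant. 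With this patch, the rest of your argument---the dominated-convergence passage, the Liouville computation of $\lim_k H_{B_k}^\alpha\mu=\min(\|u_1\|,\|u_2\|)$ via $\mu=(u_1+u_2)-(u_1\vee u_2)$, and the resulting integral identity excluding $v\equiv 0$---is correct.

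The paper's proof is quite different. It takes constant boundary data $\lambda=\max_i\|u_{\rho_i}\|$ and runs three Perron sequences $u_k,v_k,w_k$ in $B_k$ for $\rho_1,\rho_2,\rho_1+\rho_2$ respectively; the key step is that $\lambda+w_k-u_k-v_k$ is $\alpha$-superharmonic in $B_k$ with zero exterior data, hence nonnegative by the minimum principle, and from this one deduces that $w=\lim_k w_k\equiv 0$ would force a strictly positive lower bound on a global Riesz potential, contradicting the fact that potentials have infimum zero. Your route trades this indirect contradiction for an explicit computation of the limiting constant, which is more transparent; the price is the supersolution subtlety for $\mu$, which the paper avoids entirely because a constant is trivially a supersolution.
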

\begin{proof} Let $u_{\rho_1}$ (resp. $u_{\rho_2}$ ) be a nonnegative nontrivial entire bounded solutions to Eq. (\ref{eip}) for $\rho=\rho_1$ (resp. $\rho=\rho_2$ ). We recall from Lemma \ref{equationintegrale} that 
\begin{equation}\label{rho1}
u_{\rho_i}+G^\alpha_{\rn}(\rho_i\,\varphi(u_{\rho_i}))=\bnorme u_{\rho_i}\enorme\qquad i=1,2.
\end{equation}
As before, we denote by $B_k$ the ball of center $0$ and the radius $k$ $(k\geq 1)$.
 Put $\lambda:=\max_{i=1,2}\bnorme u_{\rho_i}\enorme>0.$ For every $k\geq1,$ define $u_k$ and $v_k$   as follows
\[\bacc\ba{rcll}\dal u_k&=&\rho_1\,\varphi(u_k)&\text{ in } B_k\\
u_k&=&\lambda&\text{ in }B_k^c\ea\right.\qquad;\qquad\bacc\ba{rcll}\dal v_k&=&\rho_2\,\varphi(v_k)&\text{ in } B_k\\
v_k&=&\lambda&\text{ in }B_k^c\ea\right.\]
Then by Lemma \ref{pcg}, for every $k\geq 1$ we have $u_{\rho_1}\leq u_k\leq \lambda$ and $u_{\rho_2}\leq v_k\leq \lambda.$ 
 For every $k\geq 1$ let $w_k$ be such that
 \[\bacc\ba{rcll}\dal w_k&=&\rho\,\varphi(w_k)&\text{ in } B_k\\
w_k&=&\lambda&\text{ in }B_k^c\ea\right.\quad\]
   where $\rho=\rho_1+\rho_2.$ By Remark \ref{solution}, $w:=\inf_{k\geq1}w_k$ is an entire bounded solution of Eq. (\ref{eip}). We claim   that $w\neq0$ and this achieves the proof. Indeed, in virtue of the comparison principle $w_k\leq \inf (u_k,v_k)$ and therefore
\[\dal (\lambda+w_k-u_k-v_k)=(\rho_1+\rho_2)\varphi(w_k)-\rho_1\varphi(u_k)-\rho_2\varphi(v_k)\leq 0\text{ in }B_k.\]
Hence, $\lambda+w_k-u_k-v_k$ is a $\alpha$-superharmonic function in $B_k.$ Note also that $\lambda+w_k-u_k-v_k=0$ on $B_k^c.$ We deduce from the minimum principle (for $\alpha$-superharmonic functions) that $\lambda+w_k-u_k-v_k\geq0$ in $\rn.$ Next, without lose of generality, one may suppose that $\lambda=\bnorme u_{\rho_1}\enorme.$ Seeing that
\[\lambda-u_{\rho_1}+w_k\geq \lambda-u_k+w_k\geq v_k\geq u_{\rho_2},\]
we deduce by (\ref{rho1}) (for $i=1$) that 
\begin{equation}\label{rho2}
G_{\rn}^\alpha(\rho_1\varphi(u_{\rho_1}))+w_k\geq u_{\rho_2}.
\end{equation}
Suppose, contrary to our claim, that $w=0.$ By letting $k$ tend to $\infty$ in (\ref{rho2}) and using again (\ref{rho1}) (for $i=2$) we obtain
\[\xi:=G_{\rn}^\alpha(\rho_1\varphi(u_{\rho_1}))+G_{\rn}^\alpha(\rho_2\varphi(u_{\rho_2}))\geq \bnorme u_{\rho_2}\enorme>0,\]
which leads to an absurdity because $\xi$ is a potential. Hence, the claim is checked.  
\end{proof}

Now we are in position to characterize all nonnegative functions $\rho\in L^\infty_{\text{loc}}(\rn)$ for which Eq. (\ref{eip}) admits a nontrivial bounded solution.
\begin{thm}\label{cns} Eq. (\ref{eip}) admits a  nonnegative nontrivial entire  bounded  solution in $\rn$ if and only if there exists a transient  set $A\subset \rn$ and $x_0\in\rn$ such that 
$$\int_{ A^c}\frac{\rho(y)}{\babsolu x_0-y\eabsolu^{N-\alpha} }\,dy<\infty.$$
\end{thm}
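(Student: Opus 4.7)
My plan is to prove sufficiency by splitting $\rho$ and invoking Lemma~\ref{sommerho}, and to prove necessity by choosing a sublevel set of the solution as the transient witness. For sufficiency, write $\rho=\rho_1+\rho_2$ with $\rho_1:=\rho\mathbf{1}_A$ and $\rho_2:=\rho\mathbf{1}_{A^c}$; by Lemma~\ref{sommerho} it suffices to produce a nonnegative nontrivial bounded solution of $\dal u=\rho_i\varphi(u)$ for each $i$. The case $i=2$ is immediate from Proposition~\ref{sb}, since $G^\alpha_{\rn}\rho_2(x_0)=C_{N,\alpha}\int_{A^c}\rho(y)|x_0-y|^{\alpha-N}\,dy<\infty$ by hypothesis. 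The effort lies in $i=1$, where $\rho_1$ is supported on the transient set $A$ and its Riesz potential may be infinite, so Proposition~\ref{sb} is unavailable.

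For that case, fix $\lambda>0$ and let $u_k:=H_{B_k}^{\alpha,\varphi}\lambda$ for the equation driven by $\rho_1$. By Remark~\ref{solution}, $u:=\inf_k u_k$ is already a nonnegative entire bounded solution; I only need $u\not\equiv 0$. Proposition~\ref{thm81} together with the probabilistic expression of $G_{B_k}^\alpha$ yields
\[u_k(x)=\lambda-E^x\bint\int_0^{\tau_{B_k}}\rho_1(X_t)\varphi(u_k(X_t))\,dt\eint.\]
Since $\rho_1\equiv 0$ off $A$, the integrand vanishes before $T_A$. Applying the strong Markov property at $T_A$ and re-using the same representation for $u_k$ at the point $X_{T_A}\in B_k$ gives
\[u_k(x)=\lambda\,P^x(T_A\geq\tau_{B_k})+E^x\bint u_k(X_{T_A})\,;\,T_A<\tau_{B_k}\eint\geq\lambda\,P^x(T_A\geq\tau_{B_k}).\]
Because $\tau_{B_k}\nearrow\infty$ almost surely, the events $\{T_A\geq\tau_{B_k}\}$ decrease to $\{T_A=\infty\}$, so in the limit $u(x)\geq\lambda-\hat R_\lambda^A(x)$. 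Transience of $A$ (characterization (c)) provides an $x_0$ with $\hat R_\lambda^A(x_0)<\lambda$, whence $u(x_0)>0$.

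For necessity, let $u$ be a nonnegative nontrivial entire bounded solution and set $\lambda:=\bnorme u\enorme$; Lemma~\ref{equationintegrale} gives $u+G^\alpha_{\rn}(\rho\varphi(u))=\lambda$. Pick $c\in(0,\lambda)$ with $\varphi(c)>0$ (possible unless $\varphi$ vanishes identically on $[0,\lambda]$, a degenerate case) and define $A:=\{u\leq c\}$, which is closed by continuity of $u$. On $A^c$, $\varphi(u)\geq\varphi(c)$, so
\[C_{N,\alpha}\,\varphi(c)\int_{A^c}\frac{\rho(y)}{|x_0-y|^{N-\alpha}}\,dy\leq G^\alpha_{\rn}(\rho\,\varphi(u))(x_0)\leq\lambda\]
for every $x_0\in\rn$, yielding the integrability condition. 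To see that $A$ is transient, set $s:=\min(\lambda-u,\lambda-c)$: the Riesz potential $\lambda-u=G^\alpha_{\rn}(\rho\varphi(u))$ is $\alpha$-superharmonic, the minimum with the constant $\lambda-c$ preserves this, and $s$ equals $\lambda-c$ on $A$ but is strictly smaller wherever $u>c$ (such points exist since $\sup u=\lambda>c$). Characterization (e) of transience, applied with $\lambda-c$ in place of $\lambda$, then concludes.

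The technical heart is the forward step: verifying the Dynkin representation of $u_k$, carrying out the strong Markov application at $T_A$ (crucially exploiting that the integrand vanishes off $A$, so the bound never sees the potentially infinite $G^\alpha_{\rn}\rho_1$), and invoking $\tau_{B_k}\to\infty$ almost surely for the rotation-invariant $\alpha$-stable process in $\rn$. Once these are in place, the rest is an orchestration of Lemmas~\ref{sommerho} and~\ref{equationintegrale} with characterization~(e).
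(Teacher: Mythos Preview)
Your proof is correct and follows the same global architecture as the paper: split $\rho=\rho\mathbf{1}_A+\rho\mathbf{1}_{A^c}$, handle $\rho_2$ via Proposition~\ref{sb}, handle $\rho_1$ by showing $\inf_k H_{B_k}^{\alpha,\varphi}\lambda\geq \lambda-\hat R_\lambda^A$, and then invoke Lemma~\ref{sommerho}; for necessity, pick a sublevel set of $u$ and exhibit an $\alpha$-superharmonic witness. The genuine difference lies in how the lower bound for the $\rho_1$ piece is obtained. The paper argues analytically: from $G_{B_k}^\alpha(\rho_1\varphi(u_k))\leq\lambda=R_\lambda^A$ on $A\cap B_k$ and $\{\rho_1>0\}\subset A$, the \emph{domination principle} propagates this to $G_{B_k}^\alpha(\rho_1\varphi(u_k))\leq R_\lambda^A$ on all of $B_k$, whence $u_k\geq\lambda-R_\lambda^A$. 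You instead argue probabilistically: since the additive functional in the Feynman--Kac representation of $u_k$ accrues nothing before $T_A$, the strong Markov property at $T_A$ gives $u_k(x)=\lambda P^x(T_A\geq\tau_{B_k})+E^x[u_k(X_{T_A});T_A<\tau_{B_k}]\geq\lambda P^x(T_A\geq\tau_{B_k})$, and letting $k\to\infty$ yields $u\geq\lambda-\hat R_\lambda^A$. Both routes reach the same inequality; the paper's is shorter once the domination principle is on the shelf, while yours is self-contained and makes the role of transience (via the hitting probability $P^x(T_A<\infty)$) completely transparent. For necessity the two arguments are minor variants: the paper takes $c=\bnorme u\enorme/2$ and the witness $s=2(\bnorme u\enorme-u)$ directly (using characterization (d)), whereas you allow any $c\in(0,\bnorme u\enorme)$ with $\varphi(c)>0$ and take $s=\min(\bnorme u\enorme-u,\bnorme u\enorme-c)$ to hit characterization (e) exactly. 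Your parenthetical remark about the degenerate case $\varphi\equiv 0$ on $[0,\bnorme u\enorme]$ is a point the paper leaves implicit.
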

\begin{proof} To prove the  sufficiently, we write $\rho$ as a sum $\rho=\rho_1+\rho_2$ where $\rho_1=1_A\rho$ and $\rho_2=1_{ A^c}\rho.$ In virtue of proposition \ref{sb}, Eq (\ref{eip}) admits a nonnegative nontrivial entire bounded solution $u_{\rho_2}$ for $\rho=\rho_2.$

 Now, let $\lambda>0$ and let $s_0$ be an $\alpha$-superharmonic function  such that $s_0\geq\lambda$ on $B$ but not bounded below by $\lambda$ on $\rn.$   For every $k\geq 1$ define the function $u_k$ as follows 
\[\bacc\ba{rcll}\dal u_k&=&\rho_1\varphi(u_k)&\text{ in }B_k\\
u_k&=&\lambda&\text{ in }B_k^c\ea\right.\]
  The equality $u_k+G_{B_k}^\alpha\bp\rho_1\,\varphi(u_k)\ep=\lambda$ (on $B_k$) implies that $G_{B_k}^\alpha(\rho_1\,\varphi(u_k))\leq \lambda=R_\lambda^A$ on $A\cap B_k.$ Since $\bacc \rho_1>0\eacc\subset A,$ by the domination principle (\cite[p.203]{hansen},\cite[p. 166]{helms},\cite[p. 175]{10}), we obtain that $G_{B_k}^\alpha(\rho_1\,\varphi(u_k))\leq R_\lambda^{A}$ on $ B_k.$ This proves that $u_k+R_\lambda^{A}\geq\lambda$ on $B_k,$ while on $B_k^c,$ $u_k=\lambda$ and consequently $u_k+R_\lambda^{A}\geq\lambda$ on $\rn.$ Then we get  that $u_{\rho_1}:=\inf_{k\geq1}u_k\geq \lambda-R_\lambda^{A}\not\equiv 0.$ Therefore,  by Remark \ref{solution}, $u_{\rho_1}$ is a nonnegative nontrivial entire bounded  solution of Eq. (\ref{eip}) for $\rho=\rho_1.$ Whence, the  Lemma \ref{sommerho} finishes the if part.

Let us now prove the necessity. Let   $u$ be a nontrivial entire bounded solution of Eq. (\ref{eip}). By Lemma \ref{equationintegrale}, $u$ satisfies
\[ \bnorme u\enorme=u+\int_{\rn}G_{\rn}^\alpha(\cdot,y)\rho(y)\varphi(u(y))\, dy.\]
 Let $A=\bacc 2u\leq \bnorme u\enorme\eacc.$  Define $s:=2(\bnorme u\enorme-u).$ It is clear that $s$  is a nonnegative $\alpha$-superharmonic function in $\rn,$  $s\geq\bnorme u\enorme$ in $A$ but  $s\not\geq \bnorme u\enorme$ in $\rn.$ So the set $A$ is transient. Furthermore,
\begin{displaymath}
\begin{split}\varphi(\frac{\bnorme u\enorme}{2})\int_{A^c}G_{\rn}^\alpha(\cdot,y)\, \rho(y)\,dy&\leq \int_{A^c}G_{\rn}^\alpha(\cdot,y)\, \rho(y)\,\varphi(u(y))\,dy\\
&\leq \int_{\rn}G_{\rn}^\alpha(\cdot,y)\, \rho(y)\,\varphi(u(y))\,dy\\
&\leq \bnorme u\enorme,
\end{split}
\end{displaymath}
which finishes the proof.
\end{proof}
We   summarize some of the obtained  results as follows.
\begin{cor}\label{corollaire} Under each of the following conditions, Eq. (\ref{eip}) has a nonnegative nontrivial entire bounded solution.
\benu
\item[(a1)] It exists a transient set  $A$ such that $\bacc \rho>0\eacc\subset A.$ 
\item[(a2)] There exists a point $x_0\in\rn$ such that $G_{\rn}^\alpha\rho(x_0)<\infty.$
\item[(a3)] There exists $\eta\geq0$ such that $\int_\eta^\infty r^{\alpha-1}\rho^*(r)\,dr<\infty$ where $\rho^*(r)=\sup_{\babsolu x\eabsolu=r}\rho(x).$
\eenu
In the particular case where $\rho$ is radially symmetric, (a2) and (a3) are necessary conditions as well.
\end{cor}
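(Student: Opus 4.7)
The plan is to assemble the corollary by reducing each of (a1), (a2), (a3) to results already proved, and then invoking Theorem \ref{radial} for the radially symmetric case. Since the three sufficient conditions were isolated precisely to package existing results, no substantial new argument is required; the only genuine computation is the spherical reduction that connects (a3) to the hypotheses of Theorem \ref{cns} or Proposition \ref{sb}.

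For (a1), I would apply Theorem \ref{cns} with the given transient set $A$ itself: the inclusion $\{\rho>0\}\subset A$ means that $\rho$ vanishes identically on $A^c$, so
\[
\int_{A^c}\frac{\rho(y)}{\babsolu x_0-y\eabsolu^{N-\alpha}}\,dy=0
\]
for every $x_0\in\rn$, and Theorem \ref{cns} delivers the nontrivial entire bounded solution. Condition (a2) is literally the hypothesis of Proposition \ref{sb}, so existence is immediate there as well.

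The bulk of the work is to show that (a3) implies (a2). I would compare $\rho$ pointwise to its spherical majorant $\rho^*(|\cdot|)$ and pass to spherical coordinates:
\[
G_{\rn}^\alpha\rho(0)=C_{N,\alpha}\int_{\rn}\frac{\rho(y)}{\babsolu y\eabsolu^{N-\alpha}}\,dy\leq C_{N,\alpha}\,\sigma(S^{N-1})\int_0^\infty r^{\alpha-1}\rho^*(r)\,dr.
\]
The integral on $[\eta,\infty)$ is finite by hypothesis, while the integral on $[0,\eta]$ is finite because $\rho$, and hence $\rho^*$, is locally bounded and $\alpha>0$. Thus $G_{\rn}^\alpha\rho(0)<\infty$, which is (a2) at $x_0=0$, and Proposition \ref{sb} concludes.

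For the necessity statement in the radial case, I would simply remark that if $\rho$ is radially symmetric then $\rho^*=\rho$, so (a2) and (a3) coincide word-for-word with conditions (b) and (a) of Theorem \ref{radial}, which already asserts the equivalence of these with the existence of a nontrivial nonnegative entire bounded solution of Eq.~(\ref{eip}). I do not anticipate any serious obstacle: the only step with any content is the spherical-coordinate estimate for (a3), and even there the main subtlety is remembering to split the radial integral at $\eta$ in order to exploit local boundedness of $\rho$ near the origin.
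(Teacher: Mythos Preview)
Your proposal is correct and matches the paper's intent: the corollary is stated without proof as a summary of Theorem~\ref{cns}, Proposition~\ref{sb}, and Theorem~\ref{radial}, and your reductions (a1) via Theorem~\ref{cns} with vanishing integral on $A^c$, (a2) directly via Proposition~\ref{sb}, (a3) via the spherical-coordinate estimate yielding (a2) at $x_0=0$, and the radial necessity via Theorem~\ref{radial} are exactly what is intended. The only minor remark is that in the radial case (a3) reads $\int_\eta^\infty$ while Theorem~\ref{radial}(a) reads $\int_0^\infty$, but you have already noted the local boundedness argument that bridges this.
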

We conclude this paragraph by  answering the question araised in the begining of subsection 3.
\begin{pro}
The condition  (\ref{c11}) is not  necessary  for the existence of nontrivial entire bounded solution to Eq. (\ref{eip}).
\end{pro}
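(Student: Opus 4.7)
My plan is to construct a concrete $\rho$ that satisfies the sufficient condition (a1) of Corollary~\ref{corollaire} while violating (\ref{c11}) at every $x_0\in\rn$. Following Example~\ref{exemple1}, I would take $A$ to be the thorn
\[
A = \bacc (x_1,\ldots,x_N)\in\rn\ ;\ x_1\geq e,\ x_2^2+\cdots+x_N^2\leq h^2(x_1)\eacc,
\]
with $h(r)=r/(\ln r)^\beta$ and $\beta>1/(N-3)$, which has already been shown to be $\alpha$-transient. Setting $\rho:=1_A$, which is bounded and hence locally bounded, gives $\bacc\rho>0\eacc\subset A$, so Corollary~\ref{corollaire}(a1) immediately yields a nontrivial nonnegative entire bounded solution of Eq.~(\ref{eip}) for this $\rho$.

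The only remaining work is to check that $G_{\rn}^\alpha\rho(x_0)=\infty$ for every $x_0\in\rn$, which rules out (\ref{c11}). Fix $x_0$. For $r\geq r_0(x_0)$ sufficiently large, any point $y$ of the cross-section of $A$ at height $y_1=r$ satisfies $|x_0-y|\leq Cr$ for some constant $C$ depending on $x_0$. Since that cross-section is a ball in $\rr^{N-1}$ of radius $h(r)$, integrating in cylindrical coordinates along the $x_1$-axis gives
\[
G_{\rn}^\alpha\rho(x_0)\geq c\int_{r_0}^\infty\frac{h(r)^{N-1}}{r^{N-\alpha}}\,dr = c'\int_{r_0}^\infty\frac{r^{\alpha-1}}{(\ln r)^{\beta(N-1)}}\,dr,
\]
which diverges because $\alpha>0$ (the polynomial factor $r^{\alpha-1}$ overwhelms any power of $\ln r$ in the denominator).

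The argument is essentially a reduction: all of the real work is done by Corollary~\ref{corollaire}(a1), by Example~\ref{exemple1}, and by the theorem transferring $2$-transience to $\alpha$-transience. The only computation is the cylindrical-coordinate lower bound above, and I do not foresee a subtlety there; the thorn is simultaneously thin enough to be transient yet wide enough for its Riesz potential to blow up at every point, and these two features are independent. I would also remark in passing that $\rho=1_A$ fails condition (a3) of Corollary~\ref{corollaire} as well, since $\rho^*(r)=1$ for all $r\geq e$ and $\int^\infty r^{\alpha-1}\,dr=\infty$, so the example genuinely uses the transient-set criterion.
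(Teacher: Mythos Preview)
Your proposal is correct and follows essentially the same route as the paper: choose $\rho=1_A$ for the thorn $A$ of Example~\ref{exemple1}, invoke Corollary~\ref{corollaire}(a1) for existence, and then estimate the Riesz potential from below via cylindrical coordinates to obtain the divergent integral $\int^\infty r^{\alpha-1}(\ln r)^{-\beta(N-1)}\,dr$. The paper's computation is organized slightly differently (it first replaces $|x_0-y|$ by $|y|$ and then uses spherical coordinates in $\rr^{N-1}$, arriving at $\int^\infty (h(r))^{N-1}(r^2+h^2(r))^{-(N-\alpha)/2}\,dr$), but the substance is identical; your additional remark that this $\rho$ also violates~(a3) is a nice observation not made explicitly in the paper.
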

\begin{proof} Consider the thorn $A$ introduced in Example \ref{exemple1} and  take $\rho=1_A.$ Let $x\in \rn$ and choose $R>e$ and $c>0$ such that $G^\alpha_{\rn}(x,y)\geq c\,|y|^{\alpha-N}$ for all $y\in \rn$ satisfying $|y|\geq R$. Using  spherical coordinates in $\mathbb{R}^{N-1}$, we obtain 
\begin{eqnarray*}
G_{\rn}^\alpha\rho(x)=\int_{\rn}G^\alpha_{\rn}(x,y)\rho(y)\,dy &=& \int_AG^\alpha_{\rn}(x,y)\,dy\\
&\geq& c\int_{A\cap\{|y|\geq R\}}\frac{dy}{|y|^{N-\alpha}}\\
&=& c'\, \int_R^\infty\int_0^{h(r)}\frac{t^{N-2}}{(r^2+t^2)^{\frac{N-\alpha}{2}}}\,dt\,dr\\
&\geq& c'\,\int_R^\infty\left(\int_0^{h(r)}t^{N-2}\,dt\right)\frac{dr}{(r^2+h^2(r))^{\frac{N-\alpha}{2}}}\\
&=& \frac{c'}{N-1} \int_R^\infty\frac{(h(r))^{N-1}}{(r^2+h^2(r))^{\frac{N-\alpha}{2}}}\,dr\\
&=&\infty,
\end{eqnarray*}
since, for $r$ large enough, we have
$$
\frac{(h(r))^{N-1}}{(r^2+h^2(r))^{\frac{N-\alpha}{2}}}\simeq \frac{r^{\alpha-1}}{\left(\ln r\right)^{\beta(N-1)}}.
$$
Hence, the condition (\ref{c11}) fails while Eq. (\ref{eip}) admits a nonnegative nontrivial entire bounded solution by statement (a1) in corollary \ref{corollaire} since $\bacc \rho>0\eacc=A $  is $\alpha$-transient.
\end{proof}
\section{Nonpositive perturbation }
This last section is devoted to study the existence of positive entire  bounded solution (in the distributional sense) of Eq. (\ref{ein})
where $\varphi$ is a (nontrivial) nonnegative nonincreasing continuous function  in $]0,+\infty[$ and $\rho$ is a  nonnegative nontrivial function in $L_{\text{loc}}^\infty(\rn)$ (nontrivial in the sense that   the set $\bacc \rho>0\eacc$ has positive Lebesgue measure). First, let us point out that the comparison principle established in Lemma \ref{cprin} remains true in the nonpositive nonlinearity case. To be punctilious we rewrite it: \\
Let $D$ be a bounded open set and let $u,v\in C^+_b(\rn)$ such that \[ \daln u\geq \rho\,\varphi(u)\quad;\quad \daln v\leq \rho\,\varphi(v)\quad\text{in }D.\]
If $u\geq v$ in $D^c$ then $u\geq v$ in $\rn.$

We begin with the following result (in the linear case) which can be known.
\begin{pro}\label{linear}  Eq. $\daln u=\rho $ in $\rn$ admits an entire bounded solution if and only if 
\[\sup_{x\in\rn}\int_{\rn}\frac{\rho(y)}{\babsolu x-y\eabsolu^{N-\alpha}}dy<\infty.\]
\end{pro}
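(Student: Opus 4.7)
The strategy is to use the identity $\dal G_D^\alpha \rho = -\rho$ recalled in (\ref{operateurgreen}) to convert the PDE into an integral representation on each ball, and then pass to the limit $k\to\infty$ along $D=B_k$.

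For the sufficiency direction, assume $M:=\sup_{x\in\rn}\int_{\rn}\rho(y)/|x-y|^{N-\alpha}\,dy<\infty$, and set
\[
u(x):=G_{\rn}^\alpha \rho(x)=C_{N,\alpha}\int_{\rn}\frac{\rho(y)}{|x-y|^{N-\alpha}}\,dy.
\]
By hypothesis $u$ is bounded on $\rn$. Continuity of $u$ follows by the same uniform integrability argument as in Lemma \ref{l221}, applied to the Riesz kernel: splitting the integral over $B(x,\eta_0)$ and its complement and using that $\rho$ is locally bounded together with the global bound $M$, the family $\{|x-\cdot|^{\alpha-N}\rho(\cdot); x\in\rn\}$ is uniformly integrable, which yields continuity of $u$ through Vitali's theorem. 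To verify that $\daln u=\rho$ in the distributional sense, I would use that $G_{B_k}^\alpha \rho \uparrow u$ by monotone convergence and that, by (\ref{operateurgreen}), $\dal G_{B_k}^\alpha \rho=-\rho\,1_{B_k}$ in $\mathcal{D}'(\rn)$; testing against $\theta\in\mathcal{C}_c^\infty(\rn)$ and invoking dominated convergence (the integrands are controlled by $M\|\dal\theta\|_\infty$ and $\|\theta\|_\infty\rho$ on the support of $\theta$) gives $\dal u=-\rho$, i.e., $\daln u=\rho$.

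For the necessity direction, let $u\in\cc{C}_b(\rn)$ satisfy $\daln u=\rho$ in $\mathcal{D}'(\rn)$. Fix a regular bounded domain $D$. Since $\dal G_D^\alpha\rho=-\rho=\dal u$ in $D$ and $G_D^\alpha\rho=0$ on $D^c$, the function $u-G_D^\alpha\rho$ is bounded, $\alpha$-harmonic in $D$, and equal to $u$ on $D^c$; by the uniqueness of the fractional Dirichlet problem (and the argument given in the proof of Lemma \ref{l1}), this forces
\[
u=H_D^\alpha u+G_D^\alpha\rho\quad\text{in }\rn.
\]
Since $|H_D^\alpha u|\leq\|u\|$, we deduce
\[
0\leq G_D^\alpha\rho(x)=u(x)-H_D^\alpha u(x)\leq 2\|u\|\quad\text{for every }x\in\rn.
\]
Specializing to $D=B_k$ and letting $k\to\infty$, monotone convergence yields $G^\alpha_{\rn}\rho(x)\leq 2\|u\|$ uniformly in $x\in\rn$, which is (up to the constant $C_{N,\alpha}$) the announced supremum bound.

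The only non-routine point is the continuity of the candidate solution $u=G_{\rn}^\alpha\rho$ in the sufficiency part, since the paper's notion of solution requires a continuous representative. This is handled by the uniform integrability of the shifted Riesz kernels under the standing potential-boundedness hypothesis, exactly in the spirit of Lemma \ref{l221}; the rest reduces to bookkeeping with (\ref{operateurgreen}) and monotone limits over the exhaustion by balls.
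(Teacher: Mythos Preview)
Your argument is correct and runs parallel to the paper's. For sufficiency, both take $u=G_{\rn}^\alpha\rho$; the paper simply declares this direction ``trivial'' while you supply the details on continuity and the distributional identity (which indeed follow directly from (\ref{operateurgreen}) applied with $D=\rn$, once $G_{\rn}^\alpha\rho$ is known to be finite). For necessity, the paper defines $u_k:=G_{B_k}^\alpha\rho$ (the unique solution of $\daln u_k=\rho$ in $B_k$ with zero exterior data) and invokes the comparison principle against the given nonnegative solution $w$ to obtain $u_k\leq w$, then lets $k\to\infty$. Your route---writing $u=H_{B_k}^\alpha u+G_{B_k}^\alpha\rho$ and bounding $G_{B_k}^\alpha\rho\leq 2\|u\|$---is really the same computation unpacked: the paper's inequality $u_k\leq w$ is precisely the statement $H_{B_k}^\alpha w\geq 0$ in the decomposition $w=H_{B_k}^\alpha w+G_{B_k}^\alpha\rho$. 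The small bonus of your formulation is that it does not use nonnegativity of the solution, at the price of the harmless factor~$2$.
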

\begin{proof} The sufficiently is trivial. Let us proof the necessity. Suppose that $w$ is a bounded solution of the equation $\daln u=\rho $ in $\rn$. For every $k\geq1$ define $u_k=\int_{B_k}G_{B_k}^\alpha(.,y)\,\rho(y)\,dy$ which  is obviously the (unique)   solution of the problem
\[\bacc\ba{rcll}\daln u_k&=&\rho&\text{in }B_k\\
u_k&=&0&\text{in }B_k^c\ea\right.\]
 Note  that $(u_k)$ is a nondecreasing sequence of positive functions. Define $u=\sup_k u_k$ (possibly $\infty$). Recalling that $\sup_{k\geq1}G_{B_k}^\alpha=G_{\rn}^\alpha,$ we get that $u=G^\alpha_{\rn}\rho.$ On the other hand, by the comparison principle $u_k\leq w$ in $\rn$ for every $k\geq 1$ and therefore $u=G^\alpha_{\rn}\rho$ is bounded above by $w$ which in turn means that  $G^\alpha_{\rn}\rho$ is bounded in $\rn.$
\end{proof}
We would now solve   the following nonlinear Dirichlet problem
\begin{equation}\label{sldp}
\ba{rcll}\daln u&=&\rho\,\varphi(u)&\text{ in }D\\
u&=&f&\text{ in }D^c.\ea
\end{equation}
Let us before  review  some related results obtained for $\alpha=2.$ In \cite{delPino} the author considered (\ref{sldp}) in the case where $\varphi(t)=t^{-\gamma},$ $\gamma>0,$  $f\equiv 0$ on $\partial D$ and where $D$ is smooth. He proved the existence and uniqueness of the solution provided $\rho$ is nontrivial and bounded in $D.$ Later, this result was   extended  in \cite{8} to a more general function $\varphi$ and for any nonnegative continuous boundary datum $f.$
\begin{pro}\label{e-db}
Let $D$ be a bounded regular open set. For every $f\in C^+_b(D^c),$ the problem (\ref{sldp})
admits a unique solution $u\in C^+_b(\rn).$ Furthermore, for every $x\in \rn,$ we have
\[u(x)=H_D^\alpha f(x)+\int_DG_D^\alpha (x,y)\,\rho(y)\,\varphi(u(y))\,dy.\]
\end{pro}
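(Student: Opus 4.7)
The plan is to first establish uniqueness via the comparison principle recalled just before Proposition \ref{linear}, then build existence in two stages: a direct Schauder fixed point argument when the boundary datum is bounded away from zero, and a monotone approximation $f+1/n$ for the general case in which the singularity of $\varphi$ at $0$ is neutralized by an explicit positive subsolution. Uniqueness is immediate: any two solutions $u_1,u_2$ coincide on $D^c$ and are simultaneously super- and subsolutions of (\ref{sldp}), so the comparison principle yields $u_1\geq u_2$ and $u_2\geq u_1$ in $\rn$.

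For the existence step when $f\geq\varepsilon>0$, I would mimic Proposition \ref{thm81} by setting $M=\bnorme f\enorme+\varphi(\varepsilon)\sup_{x\in D}\rho(x)\sup_{x\in D}E^x[\tau_D]$, taking $\Lambda=\{u\in\cc{C}(\overline D):\varepsilon\leq u\leq M\}$, and considering the operator $Tu=H_D^\alpha f+G_D^\alpha(\rho\,\varphi(u))$. The lower bound $u\geq\varepsilon$ forces $\varphi(u)\leq\varphi(\varepsilon)$, which together with $H_D^\alpha f\geq\varepsilon$ gives $T(\Lambda)\subset\Lambda$. Continuity of $T$ reduces to uniform continuity of $\varphi$ on $[\varepsilon,M]$, and relative compactness of $T(\Lambda)$ follows from Lemma \ref{l221}. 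Schauder's theorem supplies a fixed point $u=H_D^\alpha f+G_D^\alpha(\rho\varphi(u))$; applying $\dal$ distributionally and using (\ref{operateurgreen}) exactly as in Lemma \ref{l1} yields $\daln u=\rho\,\varphi(u)$ in $D$.

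For a general $f\in\cc{C}_b^+(D^c)$, set $f_n=f+1/n$ and let $u_n\in\cc{C}_b^+(\rn)$ be the unique solution provided by the previous step. The comparison principle applied to $(u_{n+1},u_n)$ shows that $(u_n)$ is decreasing, so $u:=\lim_n u_n$ exists and is bounded above by $u_1$. The central point is to bound $u_n$ from below by a function that stays uniformly positive in $D$. For this I would take $v:=c\,G_D^\alpha\rho$ extended by zero on $D^c$; by (\ref{operateurgreen}) one has $\daln v=c\,\rho$ in $D$, and since $\varphi$ is continuous, nonincreasing and nontrivial, $\varphi(0^+)>0$, so $c>0$ can be chosen small enough that $c\leq\varphi\bp c\bnorme G_D^\alpha\rho\enorme\ep\leq\varphi(v)$ on $D$. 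Thus $\daln v\leq\rho\,\varphi(v)$ in $D$ and $v=0\leq f_n$ on $D^c$, so the comparison principle gives $u_n\geq v$, and (assuming $\rho\not\equiv 0$ on $D$, the complementary case being linear and trivial) $v>0$ on $D$.

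To pass to the limit in $u_n=H_D^\alpha f_n+G_D^\alpha(\rho\,\varphi(u_n))$, I would use $H_D^\alpha f_n=H_D^\alpha f+1/n\to H_D^\alpha f$ uniformly on $\rn$, together with the monotone convergence $\varphi(u_n)\nearrow\varphi(u)$ and the uniform bound $G_D^\alpha(\rho\,\varphi(u_n))\leq u_n\leq\bnorme u_1\enorme$, to conclude that $G_D^\alpha(\rho\,\varphi(u))\leq\bnorme u_1\enorme<\infty$ and $u=H_D^\alpha f+G_D^\alpha(\rho\,\varphi(u))$. The limit $u$ is upper semicontinuous as a decreasing limit of continuous functions and lower semicontinuous as the sum of the continuous $H_D^\alpha f$ and the increasing pointwise limit of the continuous functions $G_D^\alpha(\rho\,\varphi(u_n))$, hence continuous on $\rn$. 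Since $u=f$ on $D^c$, the identity rewrites as $u=H_D^\alpha u+G_D^\alpha(\rho\,\varphi(u))$, which by the $\daln$-analogue of Lemma \ref{l1} means $\daln u=\rho\,\varphi(u)$ in $D$ in the distributional sense. The hard part throughout is precisely the subsolution step: without a uniform positive lower bound on $u_n$ when $f$ vanishes on $D^c$, the nonlinearity $\varphi(u_n)$ could blow up in the limit, breaking both the uniform boundedness of $G_D^\alpha(\rho\,\varphi(u_n))$ and the continuity of $u$.
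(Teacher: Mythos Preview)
Your proof is correct, and in the first stage it is genuinely more elementary than the paper's. For $f\geq\varepsilon>0$ the paper sets up a multiplicative Feynman--Kac operator
\[Tu(x)=E^x\Bigl[h(X_{\tau_D})\exp\Bigl(\int_0^{\tau_D}\rho(X_s)\tfrac{\varphi(u(X_s))}{u(X_s)}\,ds\Bigr)\Bigr]\]
and invokes the Gauge theorem to obtain the a priori bound $Tu\leq a\|f\|$. Your additive operator $Tu=H_D^\alpha f+G_D^\alpha(\rho\varphi(u))$ on $\Lambda=\{\varepsilon\leq u\leq M\}$ achieves the same fixed point with nothing more than the trivial estimate $\varphi(u)\leq\varphi(\varepsilon)$, so you avoid both the Feynman--Kac formula and the gauge machinery entirely. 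This is a real simplification.

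In the second stage both proofs run the monotone approximation $f_n=f+1/n$ and pass to the limit in the integral equation. The paper argues boundary continuity by the sandwich $H_D^\alpha f\leq u\leq u_k$; your u.s.c.\ (decreasing limit of continuous functions) plus l.s.c.\ (sum of continuous $H_D^\alpha f$ and increasing limit of continuous $G_D^\alpha(\rho\varphi(u_n))$) argument is a clean alternative. One remark: the subsolution $v=c\,G_D^\alpha\rho$ you construct is correct, but you never actually use it afterward. Your limit step already has $G_D^\alpha(\rho\varphi(u_n))\leq u_n\leq\|u_1\|$ independently of any lower bound on $u_n$, and your continuity argument likewise makes no reference to $v$. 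So the subsolution is not ``the hard part'' here; it yields the pleasant extra information that $u>0$ in $D$, but the proof of the proposition goes through without it, exactly as in the paper.
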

\begin{proof} The uniqueness is a direct consequence of the comparison principle. In order to prove the existence, we suppose first that $f\geq c$ in $D^c$ where $c>0.$ In this case $h:=H_D^\alpha f\geq c$ in $D.$ We consider the following convex closed set
\[\Gamma:=\bacc u\in C^+_b(D)\ ;\ h\leq u\leq a\bnorme f\enorme\eacc,\]
where 
\[a:=\sup_{x\in D}E^x\bint \exp\bp \frac{1}{c}\varphi(c)\bp\sup_D \rho\ep\, \tau_D\ep\eint.\]
The existence of the constant $a$ is assured by the Gauge theorem \cite{chensonggauge}. Now define in $\Gamma $ the operator 
\[ Tu(x)=E^x\bint h(X_{\tau_D})\displaystyle{e^{\displaystyle{\int_0^{\tau_D}\rho(X_s)\frac{\varphi(u(X_s))}{u(X_s)}ds}}}\eint.\]
Let $u\in \Gamma.$ It is clear that $Tu\geq h.$ Besides, 
\[Tu(x)\leq \bnorme f\enorme E^x\bint e^{\frac{1}{c}\varphi(c)\bp\sup_D \rho\ep\, \tau_D}\eint\leq a\,\bnorme f\enorme;\quad\ x\in D\]
This yields that $T(\Gamma)\subset \Gamma.$ On the other hand, for every $u\in \Gamma,$ we have
\[\rho\frac{\varphi(u)}{u}Tu\leq \frac{1}{c}\,\varphi(c)\,a\,\bnorme f\enorme\,\sup_D\rho.\]
So, in virtue of Lemma \ref{l221}, we deduce that the family 
\[\bacc \int_D G_D^\alpha(.,y)\rho(y)\,\frac{\varphi(u(y))}{u(y)}Tu(y)\,dy\quad;\quad u\in \Gamma\eacc\]
is relatively compact in $C^+(D).$ The well-known Feynman-Kac theorem implies that  
\[Tu(x)=h(x)+\int_D G_D^\alpha(x,y)\rho(y)\,\frac{\varphi(u(y))}{u(y)}Tu(y)\,dy\quad;\quad x\in D.\]
It follows that $T(\Gamma)$ is relatively compact in $C^+(D).$ By Schauder's fixed point theorem, there exists $u\in \Gamma$ such that 
\[u(x)=h(x)+\int_D G_D^\alpha (x,y)\,\rho(y)\,\varphi(u(y))\, dy\quad;\quad x\in D.\]
Since $\rho\varphi(u)$ is bounded in $D,$ we have $G_D^\alpha(\rho\varphi(u))\in \mathcal{C}_0(D)$ and consequently $u=f$ in $D^c.$ By (a straightforward modification of ) Lemma \ref{l1}, we deduce that $u$ is a solution to problem (\ref{sldp}).

We now return to the general case where $f$ is an arbitrary nonnegative bounded continuous function in $D^c.$

For every $k\geq 1,$ let $u_k$ be the (unique) positive solution of problem (\ref{sldp}) for $f_k=f+\frac{1}{k}.$ Then the sequence $(u_k),$ by statement (a) in Lemma (\ref{pcg}), is  nonincreasing and by the first step is  in $\cc{C}^+_b(\rn)$ and satisfying 
\[u_k(x)=H_D^\alpha f_k(x)+\int_D G_D^\alpha(x,y)\,\rho(y)\,\varphi(u_k(y))\,dy\quad;\quad x\in \rn,\]
for every $k\geq 1.$ Define $u=\inf_{k\geq 1}u_k.$ Letting $k$ tend to $\infty$ we obtain that $G_D^\alpha(\rho\varphi(u))<\infty$ and 
\[u(x)=H_D^\alpha f(x)+\int_D G_D^\alpha(x,y)\,\rho(y)\,\varphi(u(y))\,dy\quad;\quad x\in \rn.\]
Note that $\rho\varphi(u)$ is eventually unbounded and so $G_D^\alpha(\rho\varphi(u))$ is not necessary zero on $\partial D.$ However, for $z\in\partial D$ we can see that $f(z)=\liminf_{x\to z}H^\alpha_D f(x)\leq \liminf_{x\to z}u(x).$  Besides, $\limsup_{x\to z}u(x)\leq \limsup_{x\to z}u_k(x)=f(z)+\frac{1}{k}$ for every $k\geq 1$ and so $\limsup_{x\to z}u(x)\leq f(z).$ We deduce that $\lim_{x\to z}u(x)=f(z)$ for every $z\in\partial D$ ( in other words,  $u=f$ on $D^c$), concluding the proof.
\end{proof}
We are now in position to state the main theorem of this section.
\begin{thm}
The Eq. (\ref{ein}) admits a nonnegative bounded entire solution if and only if 
\[\sup_{x\in\rn}\int_{\rn}\frac{\rho(y)}{\babsolu x-y\eabsolu^{N-\alpha}}dy<\infty.\]
\end{thm}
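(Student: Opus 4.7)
The plan is to establish the two implications separately, in each direction combining the comparison principle recalled at the start of Section 4, the Dirichlet problem machinery of Proposition \ref{e-db}, and ideas behind the linear result Proposition \ref{linear}. For the necessity, suppose $u$ is a nonnegative bounded entire solution and set $M := \bnorme u\enorme$. Since $\varphi$ is nonincreasing and $u \leq M$, we have $\varphi(u)\geq\varphi(M)$, and in the cases of interest (such as $\varphi(t)=t^p$ with $p<0$) $\varphi(M)>0$. I would then introduce the linear reference $v_k := \varphi(M)\,G_{B_k}^\alpha\rho$ solving $\daln v_k = \varphi(M)\rho$ in $B_k$ with $v_k=0$ on $B_k^c$; the difference $u-v_k$ is $\alpha$-superharmonic in $B_k$, because $\daln(u-v_k)=\rho(\varphi(u)-\varphi(M))\geq0$, and nonnegative on $B_k^c$, so the minimum principle gives $v_k\leq u\leq M$ on $\rn$. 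Letting $k\to\infty$ and using $\sup_k G_{B_k}^\alpha=G_{\rn}^\alpha$ yields $\varphi(M)\,G_{\rn}^\alpha\rho(x)\leq M$ uniformly in $x$, which is exactly the potential boundedness of $\rho$.

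For the sufficiency, let $K:=\sup_{x\in\rn}G_{\rn}^\alpha\rho(x)<\infty$ and fix $\lambda>0$. The key observation is that $\bar u := \lambda + \varphi(\lambda)\,G_{\rn}^\alpha\rho$ is a bounded entire supersolution: $\bnorme\bar u\enorme\leq\lambda+\varphi(\lambda)K$, and because $\bar u\geq\lambda$ forces $\varphi(\bar u)\leq\varphi(\lambda)$, it satisfies $\daln\bar u=\varphi(\lambda)\rho\geq\varphi(\bar u)\rho$ on $\rn$. For each $k\geq 1$, Proposition \ref{e-db} will deliver the unique $u_k\in\cc{C}^+_b(\rn)$ with $\daln u_k=\rho\varphi(u_k)$ in $B_k$ and $u_k\equiv\lambda$ on $B_k^c$, and the comparison principle of Section 4, applied successively against the subsolution $\lambda$ and the supersolution $\bar u$, will give the a priori bounds $\lambda\leq u_k\leq\bar u$. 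Monotonicity $u_k\leq u_{k+1}$ will follow from $u_{k+1}\geq\lambda=u_k$ on $B_k^c$ together with a further comparison on $B_k$, so $u := \sup_k u_k$ is a bounded function with $\lambda\leq u\leq\bar u$.

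The most delicate step will be to pass to the limit in the integral identity $u_k=\lambda+G_{B_k}^\alpha(\rho\varphi(u_k))$, which holds by Proposition \ref{e-db} since $H_{B_k}^\alpha\lambda\equiv\lambda$. The obstruction is that $(u_k)$ is increasing while $(\varphi(u_k))$ is decreasing, so monotone convergence does not apply directly; however, the pointwise majorant $G_{B_k}^\alpha(x,y)\rho(y)\varphi(u_k(y))\leq\varphi(\lambda)\,G_{\rn}^\alpha(x,y)\rho(y)$ is integrable in $y$ precisely because $G_{\rn}^\alpha\rho(x)\leq K$, so the dominated convergence theorem passes the limit through the integral and yields $u=\lambda+G_{\rn}^\alpha(\rho\varphi(u))$ on $\rn$. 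A final appeal to (\ref{operateurgreen}) then produces the distributional identity $\daln u=\rho\varphi(u)$, giving the desired nonnegative bounded entire solution.
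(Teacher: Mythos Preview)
Your necessity argument is essentially the paper's: both bound $G_{\rn}^\alpha\rho$ by $\bnorme u\enorme/\varphi(\bnorme u\enorme)$ via the comparison principle and the monotone limit $\sup_k G_{B_k}^\alpha=G_{\rn}^\alpha$, and both tacitly require $\varphi(\bnorme u\enorme)>0$. Your sufficiency argument, however, takes a genuinely different route. You solve the Dirichlet problems with exterior datum $\lambda>0$ and use the elementary supersolution $\bar u=\lambda+\varphi(\lambda)\,G_{\rn}^\alpha\rho$, which works because $\bar u\geq\lambda$ forces $\varphi(\bar u)\leq\varphi(\lambda)$. The paper instead takes exterior datum $0$ and builds a supersolution by the nonlinear change of variable $v=\Psi^{-1}(G_{\rn}^\alpha\rho)$ with $\Psi(t)=\int_0^t ds/\varphi(s)$; convexity of $\Psi$ is what yields $\daln v\geq\rho\,\varphi(v)$. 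Your construction is more elementary and avoids the $\Psi$ device entirely, at the cost of producing a solution bounded below by $\lambda$. The paper's construction, by contrast, yields a solution dominated by $\Psi^{-1}(G_{\rn}^\alpha\rho)$, hence one that decays to zero at infinity whenever $G_{\rn}^\alpha\rho$ does; this extra information is exactly what is exploited in the subsequent Proposition~\ref{comportement}. So your approach is a clean shortcut for the theorem at hand, while the paper's approach is set up to feed directly into the decay result that follows.
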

\begin{proof} First we prove the sufficiently. For every $k\geq1,$ let $u_k$ be the nonnegative bounded solution of the semilinear Dirichlet problem
\[\ba{rcll}\daln u_k&=&\rho\,\varphi(u_k)&\text{ in }B_k\\
u_k&=&0&\text{ in }B_k^c\ea\]
where $B_k$ denotes the ball $B(0,k).$ Such a solution exists in virtue of  proposition \ref{e-db}. By the comparison principle, $(u_k)$ is nondecreasing. Define $u=\sup_k u_k.$ We claim that $u$ is bounded and this  achieves the if part. Consider the nonnegative bounded function $w:=G^\alpha_{\rn} \rho,$ solution of the equation $\daln=\rho.$  
Define 
\begin{equation}\label{psi}\Psi:[0,\infty[\avaleur [0,\infty[,\quad t\associe \int_0^t\frac{ds}{\varphi(s)}.\end{equation}
 It is obvious that $\Psi$  is continuous and increasing and therefore it is invertible from  $[0,\infty[$ to $[0,\infty[ $ ( $\lim_{t\to\infty}\Psi(t)=\infty$ since $\frac{1}{\varphi}$ is increasing ).  
 We consider then the function $v$ defined in $[0,\infty[$ by $v=\Psi^{-1}(w).$ Note that $v$ is bounded since $\Psi^{-1}$ is continuous in $[0,\infty[$ and $w$ is bounded. Seeing that $\Psi$ is convex, we obtain
\begin{displaymath}
\begin{split}
-\rho=\dal w&= \int_{\rn}\frac{\Psi(v(x))-\Psi(v(y))}{\babsolu y-x\eabsolu^{N+\alpha}}\,dy\\
&\geq \int_{\rn}\frac{v(x)-v(y)}{\babsolu y-x\eabsolu^{N+\alpha}}\Psi'(v(x))\,dy\\
&=\dal v(x)\frac{1}{\varphi(v(x))}.
\end{split}
\end{displaymath} 
Therefore, $\daln v\geq \rho\varphi(v)$ in $\rn.$ By the comparison principle, $u_k\leq v$ for every $k\geq 1,$  so   that $u\leq v$ and this means that $u$ is bounded  as desired. 

To prove the necessity, let $u$ be a nontrivial entire bounded  solution to  Eq. (\ref{ein}). Then, by proposition \ref{e-db}, for every $k\geq1$, we have 
\[u(x)=H_{B_k}^\alpha u(x) +\int_{B_k} G_{B_k}^\alpha(x,y)\rho(y)\,\varphi(u(y))\,dy\quad ;\quad x\in \rn.\]
Consequently, for every $k\geq1$
\[\varphi(\bnorme u\enorme)\int_{B_k} G_{B_k}^\alpha(x,y)\rho(y)\,dy\leq \int_{B_k} G_{B_k}^\alpha(x,y)\rho(y)\,\varphi(u(y))\,dy\leq u(x)\quad;\quad x\in\rn.\]
Letting $k$ tend to $\infty,$ by monotone convergence theorem, we obtain
\begin{equation}\label{tendverszero}G^\alpha_{\rn}\rho=\int_{\rn} G_{\rn}^\alpha(\cdot,y)\rho(y)\,dy\leq \frac{u}{\varphi(\bnorme u\enorme)}\leq \frac{\bnorme u\enorme}{\varphi(\bnorme u\enorme)},
\end{equation}
and the necessity is proved.
\end{proof}
\begin{pro}\label{comportement} Eq. (\ref{ein}) has a nonnegative bounded solution decaying to zero at infinity if and only if
\begin{equation}\label{final}\lim_{\babsolu x\eabsolu\to \infty}\int_{\rn}\frac{\rho(y)}{\babsolu x-y\eabsolu^{N-\alpha}}dy=0.
\end{equation}
\end{pro}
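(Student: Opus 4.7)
The strategy is to piggyback on the argument used in the sufficiency of the preceding theorem, observing that the explicit supersolution built there already carries the correct decay at infinity as soon as (\ref{final}) holds.

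\emph{Necessity.} Let $u$ be a nonnegative bounded entire solution of (\ref{ein}) with $u(x)\to 0$ as $|x|\to\infty$. The inequality (\ref{tendverszero}) established in the proof of the preceding theorem gives
\[
G^\alpha_{\rn}\rho(x)\leq \frac{u(x)}{\varphi(\bnorme u\enorme)}\quad;\quad x\in\rn,
\]
and the decay of $u$ immediately forces $G^\alpha_{\rn}\rho(x)\to 0$ as $|x|\to\infty$, which is exactly (\ref{final}) up to the universal constant $C_{N,\alpha}$.

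\emph{Sufficiency.} Reuse the construction from that same proof: for each $k\geq 1$ let $u_k\in\cc{C}_b^+(\rn)$ be the solution furnished by Proposition \ref{e-db} of
\[
\daln u_k=\rho\,\varphi(u_k)\ \text{ in }B_k,\qquad u_k=0\ \text{ in }B_k^c.
\]
The comparison principle makes $(u_k)$ nondecreasing, and $u:=\sup_{k}u_k$ is a nontrivial bounded distributional solution of (\ref{ein}) on $\rn$; the passage to the limit in the integral identities $u_k=H^\alpha_D u_k+G^\alpha_D(\rho\varphi(u_k))$ (for $D$ regular, $\overline D\subset B_{k_0}$) is carried out exactly as in the main theorem, using $\varphi(u_k)\leq\varphi(u_{k_0})$ on $D$ as a dominating function (note $u_{k_0}>0$ on $\overline D$) and then Lemma \ref{l1}. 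The essential fact is that the same proof produced a pointwise supersolution bound: setting $w:=G^\alpha_{\rn}\rho$ and $v:=\Psi^{-1}(w)$ with $\Psi$ as in (\ref{psi}), the convexity of $\Psi$ gives $\daln v\geq\rho\,\varphi(v)$ on $\rn$, whence by the comparison principle $u_k\leq v$ for every $k$, and therefore $u\leq v$ on $\rn$.

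Now $\Psi(0)=0$ and $\Psi$ is continuous and strictly increasing on $[0,\infty[$, so $\Psi^{-1}$ is continuous at $0$ with $\Psi^{-1}(0)=0$. Hypothesis (\ref{final}) is precisely $w(x)\to 0$ as $|x|\to\infty$, whence $v(x)=\Psi^{-1}(w(x))\to 0$, and the squeeze $0\leq u\leq v$ yields $u(x)\to 0$ at infinity.

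\emph{Main obstacle.} There is no genuine obstacle beyond bookkeeping: the supersolution $v=\Psi^{-1}(G^\alpha_{\rn}\rho)$ constructed in the main theorem of this section already encodes the correct decay, so the work reduces to rereading that proof with (\ref{final}) replacing the weaker assumption that $G^\alpha_{\rn}\rho$ be merely bounded. The only mildly delicate point is the limit passage in the integral identities for $u_k$, since $\varphi$ being nonincreasing reverses the natural monotonicity of $\varphi(u_k)$; freezing a large ball $B_{k_0}$ and dominating by $\varphi(u_{k_0})$, which is finite on any fixed compact thanks to the positivity of $u_{k_0}$, resolves this.
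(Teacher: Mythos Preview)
Your proof is correct and follows essentially the same route as the paper's: the necessity is deduced from the pointwise bound (\ref{tendverszero}), and the sufficiency from the supersolution inequality $u\leq \Psi^{-1}(G^\alpha_{\rn}\rho)$ inherited from the main theorem together with the continuity of $\Psi^{-1}$ at $0$. The paper's version is considerably terser (it simply invokes ``the previous proof''), whereas you spell out the limit passage for the $u_k$ in more detail; this extra care is fine and does not constitute a genuinely different argument.
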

\begin{proof} This proof is a furtherance of the previous one. If $u$ is a bounded solution of Eq. (\ref{ein}) vanishing at infinity then by (\ref{tendverszero}), $G^\alpha_{\rn}\rho\leq \frac{u}{\varphi(\bnorme u\enorme)}$ and so (\ref{final}) holds true.   Conversely,  by the previous proof,  there exists a  nonnegative bounded solution $u$ of Eq. (\ref{ein}) such that $u\leq \Psi^{-1}(G^\alpha_{\rn}\rho)$ where $\Psi$ is given by (\ref{psi}). Then, the hypothesis (\ref{final}) implies clearily that the  solution $u$ tends  to zero at infinity since $\Psi^{-1}$ is continous and $\Psi^{-1}(0)=0.$
\end{proof}
\begin{cor} Assume that $\rho$ is radially symmetric. Then the following statements are equivalent.
\benu
\item[(a1)] There exists $\eta>0$ such that $\int_\eta^\infty r^{\alpha-1}\rho(r)\,dr<\infty.$
\item[(a2)] Eq. (\ref{ein}) has a positive entire solution in $\rn$ decaying to zero at infinity.
\item[(a3)] Eq. (\ref{ein}) has a positive entire solution in $\rn.$
\eenu
\end{cor}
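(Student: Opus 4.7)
The natural cycle is $(a2)\Rightarrow(a3)\Rightarrow(a1)\Rightarrow(a2)$. The first implication is tautological since a positive entire solution decaying to zero is, in particular, a positive entire solution, so nothing has to be done there.

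For $(a3)\Rightarrow(a1)$, I would invoke the main theorem of this section: a positive (bounded) entire solution exists only if $\sup_{x\in\rn}G_{\rn}^\alpha\rho(x)<\infty$; in particular $G_{\rn}^\alpha\rho(0)<\infty$. Because $\rho$ is radially symmetric, passing to spherical coordinates yields
\[
G_{\rn}^\alpha\rho(0)=C_{N,\alpha}\int_{\rn}\frac{\rho(y)}{|y|^{N-\alpha}}\,dy=C_{N,\alpha}\,\sigma(S^{N-1})\int_0^\infty r^{\alpha-1}\rho(r)\,dr,
\]
so that $\int_0^\infty r^{\alpha-1}\rho(r)\,dr<\infty$, which is stronger than (a1) for any $\eta>0$.

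For $(a1)\Rightarrow(a2)$, I would first upgrade (a1) to finiteness of the full integral on $(0,\infty)$: since $\rho$ is locally bounded and $\alpha>0$, the integral $\int_0^\eta r^{\alpha-1}\rho(r)\,dr$ is finite, hence $\int_0^\infty r^{\alpha-1}\rho(r)\,dr<\infty$ and the same spherical-coordinate computation gives $G_{\rn}^\alpha\rho(0)<\infty$. Since $\rho$ is radially symmetric, so is the potential $G_{\rn}^\alpha\rho$. A radially symmetric $\alpha$-potential that is finite at one point tends to zero at infinity by \cite[Theorem 2]{mizuta} (already employed in the proof of Theorem \ref{radial}). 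This gives exactly the hypothesis (\ref{final}) of Proposition \ref{comportement}, which then produces a positive entire bounded solution of Eq. (\ref{ein}) decaying to zero at infinity.

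The only nontrivial point is the step from (a1) to the vanishing condition (\ref{final}); here radial symmetry is essential, because it reduces the question from a uniform estimate on $G_{\rn}^\alpha\rho$ to the single pointwise finiteness $G_{\rn}^\alpha\rho(0)<\infty$, after which Mizuta's theorem carries the argument to infinity. Everything else is a direct application of the main theorem of the section, of Proposition \ref{comportement}, and of spherical coordinates.
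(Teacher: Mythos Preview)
Your argument is correct and follows essentially the same route as the paper: the cycle $(a1)\Rightarrow(a2)\Rightarrow(a3)\Rightarrow(a1)$, using local boundedness plus spherical coordinates to get $G_{\rn}^\alpha\rho(0)<\infty$, Mizuta's theorem for the decay of the radially symmetric potential, Proposition~\ref{comportement} to produce the vanishing solution, and the main theorem of the section for the reverse implication. The only cosmetic difference is that you spell out the intermediate steps (upgrading (a1) to the full integral, noting the radial symmetry of $G_{\rn}^\alpha\rho$) more explicitly than the paper does.
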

\begin{proof} It follows from (a1) that $G_{\rn}^\alpha\rho(0)<\infty$ and consequently $G_{\rn}^\alpha\rho$ is a potential in $\rn.$ Hence, by \cite[Theorem 2]{mizuta}, $\lim_{\babsolu x\eabsolu\to\infty}G_{\rn}^\alpha\rho(x)=0$ and so (a2) is obtained by proposition \ref{comportement}. Now it is obvious that (a2) implies (a3). Finally, if we suppose that (a3) holds then $\sup_{x\in\rn}G_{\rn}^\alpha\rho(x)<\infty$ and in particular $G_{\rn}^\alpha\rho(0)=\int_0^\infty r^{\alpha-1}\rho(r)\,dr<\infty.$ Thus (a3) implies (a1).
\end{proof}

\begin{thebibliography}{10}

\bibitem{armitage}
D.~H. Armitage and S.~J. Gardiner.
\newblock {\em Classical potential theory}.
\newblock Springer, London, 2001.

\bibitem{autuori}
G.~Autuori and P.~Pucci.
\newblock Elliptic problems involving the fractional {L}aplacian in
  {$\Bbb{R}^N$}.
\newblock {\em J. Differential Equations}, 255(8):2340--2362, 2013.

\bibitem{baha02}
A.~Baalal and W.~Hansen.
\newblock Nonlinear perturbation of balayage spaces.
\newblock {\em Ann. Acad. Sci. Fenn. Math.}, 27(1):163--172, 2002.

\bibitem{barrios}
B.~Barrios, E.~Colorado, A.~de~Pablo, and U.~S{\'a}nchez.
\newblock On some critical problems for the fractional {L}aplacian operator.
\newblock {\em J. Differential Equations}, 252(11):6133--6162, 2012.

\bibitem{12}
R.~F. Bass.
\newblock {\em Probabilistic techniques in analysis}.
Springer-Verlag, New
  York, 1995.

\bibitem{mahmoud}
M.~Ben~Fredj and K.~El~Mabrouk.
\newblock Comparison of harmonic kernels associated with a class of semilinear
  elliptic equations.
\newblock {\em Probab. Math. Statist.}, 33(1):29--44, 2013.

\bibitem{bet}
D.~Betsakos.\newblock Some properties of {$\alpha$}-harmonic measure.
\newblock {\em Colloq. Math.}, 111(2):297--314, 2008.

\bibitem{birkner}
M.~Birkner, J.~A. L{\'o}pez-Mimbela, and A.~Wakolbinger.
\newblock Comparison results and steady states for the {F}ujita equation with
  fractional {L}aplacian.
\newblock {\em Ann. Inst. H. Poincar\'e Anal. }, 22(1):83--97,
  2005.

\bibitem{hansen}
J.~Bliedtner and W.~Hansen.
\newblock {\em Potential theory, an analytic and probabilistic approach to balayage}.
\newblock  Springer-Verlag, Berlin, 1986.

\bibitem{bogdan1997}
K.~Bogdan.
\newblock The boundary {H}arnack principle for the fractional {L}aplacian.
\newblock {\em Studia Math.}, 123(1):43--80, 1997.

\bibitem{bogdan1999}
K.~Bogdan.
\newblock Representation of {$\alpha$}-harmonic functions in {L}ipschitz
  domains.
\newblock {\em Hiroshima Math. J.}, 29(2):227--243, 1999.

\bibitem{bogdanby}
K.~Bogdan and T.~Byczkowski.
\newblock Potential theory for the {$\alpha$}-stable {S}chr\"odinger operator
  on bounded {L}ipschitz domains.
\newblock {\em Studia Math.}, 133(1):53--92, 1999.

\bibitem{bogdandistribution}
K.~Bogdan and T.~Byczkowski.
\newblock On the {S}chr\"odinger operator based on the fractional {L}aplacian.
\newblock {\em Bull. Polish Acad. Sci. Math.}, 49(3):291--301, 2001.

\bibitem{bogdan2009}
K.~Bogdan, T.~Byczkowski, T.~Kulczycki, M.~Ryznar, R.~Song, and
  Z.~Vondra{\v{c}}ek.
\newblock {\em Potential analysis of stable processes and its extensions},
\newblock Springer-Verlag, Berlin, 2009.

\bibitem{bogdanliouville}
K.~Bogdan, T.~Kulczycki, and A.~Nowak.
\newblock Gradient estimates for harmonic and {$q$}-harmonic functions of
  symmetric stable processes.
\newblock {\em Illinois J. Math.}, 46(2):541--556, 2002.

\bibitem{brandle}
C.~Br{\"a}ndle, E.~Colorado, A.~de~Pablo, and U.~S{\'a}nchez.
\newblock A concave-convex elliptic problem involving the fractional
  {L}aplacian.
\newblock {\em Proc. Roy. Soc. Edinburgh Sect. A}, 143(1):39--71, 2013.

\bibitem{bresiz}
H.~Brezis and S.~Kamin.
\newblock Sublinear elliptic equations in {${\bf R}^n$}.
\newblock {\em Manuscripta Math.}, 74(1):87--106, 1992.

\bibitem{maagli}
R. Chemmam, H. M{\^a}agli, and S. Masmoudi.
\newblock On a new {K}ato class and positive solutions of {D}irichlet problems
  for the fractional {L}aplacian in bounded domains.
\newblock {\em Nonlinear Anal.}, 74(5):1555--1576, 2011. 

\bibitem{chenliouville}
W.~Chen, L.~D'Ambrosio, and Y.~Li.
\newblock Some liouville theorems for the fractional laplacian.
\newblock  {\em Nonlinear Anal.}, vol. in press, 2014.

\bibitem{chengauge}
Z.~Q. Chen and R.~Song.
\newblock Intrinsic ultracontractivity and conditional gauge for symmetric
  stable processes.
\newblock {\em J. Funct. Anal.}, 150(1):204--239, 1997.

\bibitem{chensong1997}
Z.~Q. Chen and R.~Song.
\newblock Estimates on {G}reen functions and {P}oisson kernels for symmetric
  stable processes.
\newblock {\em Math. Ann.}, 312(3):465--501, 1998.

\bibitem{chensong1998}
Z.~Q. Chen and R.~Song.
\newblock General gauge and conditional gauge theorems.
\newblock {\em Ann. Probab.}, 30(3):1313--1339, 2002.

\bibitem{chensonggauge}
Z.~Q. Chen, R.~J. Williams, and Z.~Zhao.
\newblock On the existence of positive solutions of semilinear elliptic
  equations with {D}irichlet boundary conditions.
\newblock {\em Math. Ann.}, 298(3):543--556, 1994.

\bibitem{chung}
K.~L. Chung.
\newblock {\em Lectures from {M}arkov processes to {B}rownian motion}, 
\newblock Springer-Verlag, New York-Berlin, 1982.

\bibitem{5}
K.~L. Chung and Z.~X. Zhao.
\newblock {\em From {B}rownian motion to {S}chr\"odinger's equation}, 
\newblock Springer-Verlag, Berlin, 1995.

\bibitem{delPino}
M.~A. del Pino.
\newblock A global estimate for the gradient in a singular elliptic boundary
  value problem.
\newblock {\em Proc. Roy. Soc. Edinburgh Sect. A}, 122(3-4):341--352, 1992.

\bibitem{dyn00}
E.~B. Dynkin.
\newblock Solutions of semilinear differential equations related to harmonic
  functions.
\newblock {\em J. Funct. Anal.}, 170(2):464--474, 2000.

\bibitem{edelson}
A.~L. Edelson.
\newblock Entire solutions of singular elliptic equations.
\newblock {\em J. Math. Anal. Appl.}, 139(2):523--532, 1989.

\bibitem{khalifabounded}
K.~El~Mabrouk.
\newblock Entire bounded solutions for a class of sublinear elliptic equations.
\newblock {\em Nonlinear Anal.}, 58(1-2):205--218, 2004.

\bibitem{8}
K.~El~Mabrouk.
\newblock Positive solutions to singular semilinear elliptic problems.
\newblock {\em Positivity}, 10(4):665--680, 2006.

\bibitem{fall}
M.~M. Fall and T.~Weth.
\newblock Nonexistence results for a class of fractional elliptic boundary
  value problems.
\newblock {\em J. Funct. Anal.}, 263(8):2205--2227, 2012.

\bibitem{felmerquaas}
P.~Felmer, A.~Quaas, and J.~Tan.
\newblock Positive solutions of the nonlinear {S}chr\"odinger equation with the
  fractional {L}aplacian.
\newblock {\em Proc. Roy. Soc. Edinburgh Sect. A}, 142(6):1237--1262, 2012.

\bibitem{felmerwang}
P.~Felmer and Y.~Wang.
\newblock Radial symmetry of positive solutions to equations involving the
  fractional {L}aplacian.
\newblock {\em Commun. Contemp. Math.}, 16(1):1350023, 24, 2014.

\bibitem{helms}
L.~L. Helms.
\newblock {\em Potential theory}.
\newblock Universitext. Springer-Verlag London, Ltd., London, 2009.

\bibitem{kusano}
T.~Kusano and C.~A. Swanson.
\newblock Entire positive solutions of singular semilinear elliptic equations.
\newblock {\em Japan. J. Math. (N.S.)}, 11(1):145--155, 1985.

\bibitem{lair}
A.~V. Lair and A.~W. Shaker.
\newblock Entire solution of a singular semilinear elliptic problem.
\newblock {\em J. Math. Anal. Appl.}, 200(2):498--505, 1996.

\bibitem{lairwood}
A.~V. Lair and A.~W. Wood.
\newblock Large solutions of sublinear elliptic equations.
\newblock {\em Nonlinear Anal.}, 39(6):745--753, 2000.

\bibitem{Landkof}
N.~S. Landkof.
\newblock {\em Foundations of modern potential theory}.
\newblock Springer-Verlag, New York-Heidelberg, 1972.

\bibitem{lazer}
A.~C. Lazer and P.~J. McKenna.
\newblock On a singular nonlinear elliptic boundary-value problem.
\newblock {\em Proc. Amer. Math. Soc.}, 111(3):721--730, 1991.

\bibitem{michalik}
K.~Michalik and K.~Samotij.
\newblock Martin representation for {$\alpha$}-harmonic functions.
\newblock {\em Probab. Math. Statist.}, 20(1):75--91, 2000.

\bibitem{mizuta}
Y.~Mizuta.
\newblock On the radial limits of riesz potentials at infinity.
\newblock {\em Hiroshima Math. J.}, 7(1):165--175, 1977.

\bibitem{10}
S.~C. Port and C.~J. Stone.
\newblock {\em Brownian motion and classical potential theory}.
\newblock Academic Press, New
  York-London, 1978.

\bibitem{rud}
W.~Rudin.
\newblock {\em Real and complex analysis}.
\newblock McGraw-Hill Book Co.,  second
  edition, 1974.

\bibitem{servadei}
R.~Servadei and E.~Valdinoci.
\newblock Mountain pass solutions for non-local elliptic operators.
\newblock {\em J. Math. Anal. Appl.}, 389(2):887--898, 2012.

\bibitem{generaltheory}
M.~Sharpe.
\newblock {\em General theory of {M}arkov processes}, 
\newblock Academic Press, Inc., Boston, MA, 1988.

\bibitem{silvestre2007}
L.~Silvestre.
\newblock Regularity of the obstacle problem for a fractional power of the
  laplace operator.
\newblock {\em Comm.  Pure  Appl. Math.}, 60(1):67--112,
  2007.

\bibitem{zha86}
Z.~X. Zhao.
\newblock Green function for {S}chr\"odinger operator and conditioned
  {F}eynman-{K}ac gauge.
\newblock {\em J. Math. Anal. Appl.}, 116(2):309--334, 1986.

\end{thebibliography}

\end{document}